\newtheorem{teor}{Theorem}[section]
\newtheorem{prop}[teor]{Proposition}
\theoremstyle{definition}
\newtheorem{defi}[teor]{Definition}
\numberwithin{equation}{section}
\newcommand{\R}{\mathbb{R}}
\newcommand{\W}{\Omega}
\newcommand{\Om}{\Omega}
\newcommand{\w}{\omega}
\newcommand{\di}{\textsf{d}}
\newcommand{\wit}{\widetilde}
\newcommand{\des}{\displaystyle}
\newcommand{\n}[1]{\| #1 \|}
\DeclareMathOperator{\Int}{Int} 
\DeclareMathOperator{\spa}{span} \DeclareMathOperator{\rg}{Rg}
\begin{document}
\title[Persistence in  quasimonotone  parabolic PFDEs with delay]{Persistence in non-autonomous quasimonotone parabolic partial functional differential equations with delay}
\author[R. Obaya]{Rafael Obaya}
\author[A.M. Sanz]{Ana M. Sanz}
\address[R. Obaya]{Departamento de Matem\'{a}tica
Aplicada, E. Ingenier\'{\i}as Industriales, Universidad de Valladolid,
47011 Valladolid, Spain.} 
\email{rafoba@wmatem.eis.uva.es}
\address[A.M. Sanz]{Departamento de Did\'{a}ctica de las Ciencias Experimentales, Sociales y de la Matem\'{a}tica,
E.U. Educaci\'{o}n, Universidad de Valladolid, 34004 Palencia, Spain.} \email{anasan@wmatem.eis.uva.es}
\address[A.M. Sanz and R. Obaya]{IMUVA, Instituto de Investigaci\'{o}n en
Matem\'{a}ticas, Universidad de Valladolid, Spain.}
\thanks{The authors were partly supported by MINECO/FEDER  grant
MTM2015-66330-P, and the European Commission under project H2020-MSCA-ITN-2014 643073 CRITICS}
\date{}
\dedicatory{Dedicated to Peter E. Kloeden on the occasion of his 70th birthday}
\begin{abstract}
This paper provides a dynamical frame to study non-autonomous  parabolic
partial differential equations with finite delay. Assuming monotonicity of the linearized semiflow, conditions for the existence of a continuous separation of
type~II over a minimal set are given. Then, practical criteria for the
uniform or strict persistence of the  systems above a minimal set are
obtained.
\end{abstract}
\keywords{Topological dynamics, skew-product semiflows, continuous separation, partial functional differential
equations with finite delay, uniform and strict persistence}
\subjclass{35K58, 37B55, 37C60, 37C65}
\renewcommand{\subjclassname}{\textup{2010} Mathematics Subject Classification}
\maketitle
\section{Introduction}\label{intro}\noindent
In this paper we investigate some qualitative properties of the  skew-product semiflows generated by the solutions of non-autonomous parabolic partial functional differential equations (PFDEs for short) with finite delay and boundary conditions of Neumann, Robin or Dirichlet type. In this non-autonomous framework the phase space is a product space $\Omega \times C$, where the base $\Omega$ is a compact metric space under the action of a continuous flow $\sigma: \R \times \Omega \to \Omega$, $(t,\omega) \mapsto \omega {\cdot} t$ and the state space $C$  is an infinite dimensional Banach space of continuous functions. The skew-product semiflow $\tau: \R_+\times \Omega \times C \to \Omega \times C$, $(t,\omega,\varphi) \mapsto (\omega{\cdot} t,u(t,\omega, \varphi))$ is built upon the mild solutions of the associated abstract Cauchy problems (ACPs for short) with delay. We assume that the flow in the base is minimal.
\par
This formalism permits to carry out a dynamical study of the solutions of non-autonomous differential equations in which the temporal variation of the coefficients is almost periodic, almost automorphic or, more generally, recurrent. Frequently, $\Omega$ is obtained as the hull of the non-autonomous function defining the differential equations, although the approach considered here is more general. The references Ellis~\cite{elli}, Johnson et al.~\cite{jonnf}, Sacker and Sell~\cite{sase, sase94}, and Shen and Yi~\cite{shyi}, and references therein, contain ingredients of the theory of non-autonomous dynamical systems which will be used throughout this work.
\par
The main issue in the paper is the persistence of the systems of parabolic PFDEs.
Persistence is a dynamical property which has a great interest in mathematical modelling, in areas such as biological population dynamics, epidemiology, ecology or neural networks. In the field of monotone dynamical systems, different notions of persistence have been introduced, with the general meaning that in the long run the trajectories place themselves above a prescribed region of the phase space,  which we take to be a minimal set $K\subset \W\times C$. In many applications this minimal set  is  $\Omega \times \{0\}$, so that, roughly speaking, uniform or strict persistence  means that the solutions eventually become uniformly strongly or strictly positive, respectively.
\par
In~\cite{obsa18} Obaya and Sanz showed that in the general non-autonomous setting,  in order that persistence can be detected experimentally, this notion should  be considered  as a collective property of the complete family of systems over $\Om$. We follow this collective approach  to develop dynamical properties of persistence with important practical implications. Our study intends to extend the theory of persistence written in Novo et al.~\cite{noos7} and Obaya and Sanz~\cite{obsa} for non-autonomous  ODEs, FDEs with delay and parabolic PDEs to parabolic PFDEs, considering also the cases of Robin or Dirichlet boundary conditions.
\par
We briefly explain the structure and contents of the paper. Some basic concepts in the theory of non-autonomous dynamical systems are included in Section~\ref{sec-prelim}. Section~\ref{sec-skew-product} is devoted to describe the dynamical scenario in which the parabolic problems are immersed, distinguishing the case of Neumann or Robin boundary conditions, and the Dirichlet case. We analyze the regularity properties and the long-term behaviour of the solutions that determine the topological structure of omega-limit sets and minimal sets. We follow arguments in the line of Martin and Smith~\cite{masm0,masm} and  Wu~\cite{wu} to extend previous results given in Novo et al.~\cite{nonuobsa} for Neumann boundary conditions, to the case of Robin and Dirichlet boundary conditions. We also study the consequences of the so-called quasimonotone condition in the problems.
\par
In Section~\ref{sec-linearized sem}, under regularity conditions in the reaction terms in the equations, we build the variational problems along the semiorbits of $\tau$, whose mild solutions induce the linearized skew-product semiflow. Then, in the linear and monotone setting, we consider continuous separations of type~II and the associated principal spectrums, which can be determined by Lyapunov exponents. The classical concept of continuous separation given by Pol\'{a}\v{c}ik and Tere\v{s}\v{c}\'{a}k~\cite{pote} in a discrete dynamical setting, and then extended to a continuous setting by Shen and Yi~\cite{shyi}, has proved to be widely applicable in non-autonomous ODEs and parabolic PDEs, but not in equations with delay. Later, Novo et al.~\cite{noos6} introduced a variation of this notion, and called it continuous separation of type~II, in order to make it applicable to delay equations. The results in Novo et al.~\cite{noos7} and Obaya and Sanz~\cite{obsa,obsa18} show its importance in the dynamical description of non-autonomous FDEs with finite delay, and now it becomes relevant in reaction-diffusion systems with delay.
\par
Finally, in Section~\ref{sec-uniform persistence} we consider regular and quasimonotone parabolic PFDEs.  
Assuming the existence of a minimal set $K$ for $\tau$ with a flow extension, 
we first establish an easy criterion for the existence of a continuous separation of type~II over $K$ in terms of the irreducibility of a constant matrix calculated from the partial derivatives of the reaction term in the equations, with respect to the non-delayed and delayed state components.  A key fact is that in the general case, after a convenient permutation of the variables in the system, the constant matrix mentioned before has a block lower triangular structure, with irreducible diagonal blocks. This permits to consider a family of lower dimensional linear systems with a continuous separation, for which the property of persistence depends upon the positivity of its principal spectrum. In this situation, a sufficient condition for the presence of uniform or strict  persistence in the area  above $K$ is given in terms of the principal spectrums of an adequate subset of such systems in each case.
\section{Some preliminaries}\label{sec-prelim}\noindent
In this section we include some basic notions in
topological dynamics for non-autonomous dynamical systems.
\par
Let $(\W,d)$ be a compact
metric space. A real {\em continuous flow \/} $(\W,\sigma,\R)$ is
defined by a continuous map $\sigma: \R\times \W \to  \W,\;
(t,\w)\mapsto \sigma(t,\w)$ satisfying  $\sigma_0=\text{Id}$,  and  $\sigma_{t+s}=\sigma_t\circ\sigma_s$ for each $t, s\in\R$,
where $\sigma_t(\w)=\sigma(t,\w)$ for all $\w \in \W$ and $t\in \R$.
The set $\{ \sigma_t(\w) \mid t\in\R\}$ is called the {\em orbit\/}
of the point $\w$. A subset $\W_1\subset \W$ is {\em
$\sigma$-invariant\/} if $\sigma_t(\W_1)=\W_1$ for every $t\in\R$, and it is {\em minimal \/} if it is compact,
$\sigma$-invariant and it does not contain properly any other
compact $\sigma$-invariant set. Every compact
and $\sigma$-invariant set contains a minimal subset. Furthermore, a
compact $\sigma$-invariant subset is minimal if and only if every
orbit is dense. We say that the continuous flow $(\W,\sigma,\R)$ is
{\em recurrent\/} or {\em minimal\/} if $\W$ is minimal.
\par
A finite regular measure defined on the Borel sets of $\W$ is called
a Borel measure on $\W$. Given $\mu$ a normalized Borel measure on
$\W$, it is {\em $\sigma$-invariant\/} if $\mu(\sigma_t(\W_1))=\mu(\W_1)$ for every Borel subset
$\W_1\subset \W$ and every $t\in \R$. It is {\em ergodic\/}  if, in
addition, $\mu(\W_1)=0$ or $\mu(\W_1)=1$ for every
$\sigma$-invariant Borel subset $\W_1\subset \W$.
\par
Let $\R_+=\{t\in\R\,|\,t\geq 0\}$. Given a continuous compact flow $(\W,\sigma,\R)$ and a
complete metric space $(C,\di)$, a continuous {\em skew-product semiflow\/} $(\W\times
C,\tau,\,\R_+)$ on the product space $\W\times C$ is determined by a continuous map
\begin{equation*}
 \begin{array}{cccl}
 \tau \colon  &\R_+\times\W\times C& \longrightarrow & \W\times C \\
& (t,\w,\varphi) & \mapsto &(\w{\cdot}t,u(t,\w,\varphi))
\end{array}
\end{equation*}
 which preserves the flow on $\W$, denoted by $\w{\cdot}t=\sigma(t,\w)$ and referred to as the {\em base flow\/}.
 The semiflow property means that $\tau_0=\text{Id}$, and $\tau_{t+s}=\tau_t \circ \tau_s$ for all  $t, s\geq 0$,
where again $\tau_t(\w,\varphi)=\tau(t,\w,\varphi)$ for each $(\w,\varphi) \in \W\times C$ and $t\in \R_+$.
This leads to the so-called semicocycle property:
\begin{equation}\label{semicocycle}
 u(t+s,\w,\varphi)=u(t,\w{\cdot}s,u(s,\w,\varphi))\quad\text{for }\; t,s\ge 0\;\;\text{and }\; (\w,\varphi)\in \W\times C\,.
\end{equation}
\par
The set $\{ \tau(t,\w,\varphi)\mid t\geq 0\}$ is the {\em semiorbit\/} of
the point $(\w,\varphi)$. A subset  $K$ of $\W\times C$ is {\em positively
invariant\/} if $\tau_t(K)\subseteq
K$ for all $t\geq 0$ and it is $\tau$-{\em invariant\/} if $\tau_t(K)=
K$ for all $t\geq 0$.  A compact positively invariant set $K$ for the
semiflow  is {\em minimal\/} if it does not contain any nonempty
compact positively invariant set  other than itself. The restricted
semiflow on a compact and $\tau$-invariant set $K$ admits a {\em
flow extension\/} if there exists a continuous flow
$(K,\wit\tau,\R)$ such that $\wit \tau(t,\w,\varphi)=\tau(t,\w,\varphi)$ for all
$(\w,\varphi)\in K$ and $t\in\R_+$.
\par
Whenever a semiorbit $\{\tau(t,\w_0,\varphi_0)\mid t\ge 0\}$ is relatively
compact, one can consider the {\em omega-limit set\/} of
$(\w_0,\varphi_0)$, formed by the
limit points of the semiorbit as $t\to\infty$. The omega-limit set is then a
nonempty compact connected and $\tau$-invariant set. An important
property of an omega-limit set  is that semiorbits admit backward
extensions inside it. Therefore, the sufficient condition for such a
set to have a flow extension is the uniqueness of backward orbits
(see~\cite{shyi} for more details).
\par
In this paper we will sometimes work under some differentiability
assumptions. More precisely, when $C$ is a Banach space, the
skew-product semiflow $\tau$ is said to be of class $C^1$
when $u$ is assumed to be of class $C^1$ in $\varphi$, meaning
that $D_{\!\varphi}u(t,\w,\varphi)$ exists for any $t>0$ and any $(\w,\varphi)\in\W\times
C$ and for each fixed $t>0$, the map $(\w,\varphi)\mapsto D_{\!\varphi}u(t,\w,\varphi)\in
\mathcal L(C)$ is continuous in a neighborhood of any compact set
$K\subset \W\times C$, for the norm topology on $\mathcal L(C)$; moreover, for any $\phi\in C$, $\lim_{\,t\to
0^+}D_{\!\varphi}u(t,\w,\varphi)\,\phi=\phi $ uniformly for $(\w,\varphi)$ in compact sets of
$\W\times C$.
\par
In that case, whenever $K\subset \W\times C$ is a compact positively
invariant set, we can define a continuous  linear skew-product
semiflow called the {\em linearized skew-product semiflow\/} of
$\tau$ over $K$,
\begin{equation*}
 \begin{array}{cccl}
 L: & \R_+\times K \times C& \longrightarrow & K \times C\\
& (t,(\w,\varphi),\phi) & \mapsto &(\tau(t,\w,\varphi),D_{\!\varphi}u(t,\w,\varphi)\,\phi)\,.
\end{array}
\end{equation*}
We note that $D_{\!\varphi}u$ satisfies the linear semicocycle property:
\begin{equation}\label{linear semicocycle}
D_{\!\varphi}u(t+s,\w,\varphi)=D_{\!\varphi}u(t,\tau(s,\w,\varphi))\,D_{\!\varphi}u(s,\w,\varphi)\quad\text{for }\; t,s\ge 0\,,\; (\w,\varphi)\in K.
\end{equation}
\par
Finally, we include the definition of monotone skew-product semiflow. A
Banach space $X$ is  {\em ordered} if there is a closed
convex cone, i.e., a nonempty closed subset $X_+\subset X$
satisfying $X_+\!+X_+\subset X_+$, $\R_+ X_+\!\subset X_+$ and
$X_+\cap(-X_+)=\{0\}$. If besides the positive cone has a nonempty interior, $\Int
X_+\not=\emptyset$, $X$ is {\em strongly ordered}. The (partial) {\em strong order relation\/} in
$X$ is then defined by
\begin{equation}\label{order}
\begin{split}
 v_1\le v_2 \quad &\Longleftrightarrow \quad v_2-v_1\in X_+\,;\\
 v_1< v_2  \quad &\Longleftrightarrow \quad v_2-v_1\in X_+\;\text{ and }\;v_1\ne v_2\,;
\\  v_1\ll v_2 \quad &\Longleftrightarrow \quad v_2-v_1\in \Int X_+\,.\qquad\quad\quad~
\end{split}
\end{equation}
The relations $\ge,\,>$ and $\gg$ are defined in the obvious way.
If $C$ is an ordered Banach space, the skew-product semiflow $(\W\times C,\tau,\R_+)$ is {\em
monotone\/} if
\begin{equation*}
u(t,\w,\varphi)\le u(t,\w,\phi)\,\quad \text{for\, $t\ge 0$, $\w\in\W$
\,and\, $\varphi, \phi\in C$ \,with\, $\varphi\le \phi$}\,.
\end{equation*}
Note that monotone semiflows are forward dynamical systems on
ordered Banach spaces which preserve the order of initial states
along the semiorbits.
\section{Skew-product semiflows induced by  parabolic PFDEs with
delay}\label{sec-skew-product}\noindent
In this section we consider time-dependent families of initial boundary value (IBV for short) problems given by systems of parabolic PFDEs with a fixed
delay (just taken to be $1$) over a minimal flow $(\W,\sigma,\R)$, with Dirichlet, Neumann or Robin boundary conditions.  More precisely, for each $\w\in\W$ we consider the IBV problem
\begin{equation*}
\left\{\begin{array}{l} \des\frac{\partial y_i}{\partial t}(t,x)=
 d_i\Delta y_i(t,x)+f_i(\w{\cdot}t,x,y(t,x),y(t-1,x))\,,\quad t>0\,,\;x\in \bar U,\\[.2cm]
\alpha_i(x)\,y_i(t,x)+\delta_i\,\des\frac{\partial y_i}{\partial n}(t,x) =0\,,\quad  t>0\,,\;\,x\in \partial U,\\[.2cm]
 y_i(s,x)=\varphi_i(s,x)\,,\quad  s\in [-1,0]\,,\;\,x\in \bar U,
\end{array}\right.
\end{equation*}
for $i=1,\ldots,n$, where $\w{\cdot}t$ denotes the flow on $\W$;  $U$, the spatial domain, is a bounded, open and
connected  subset of $\R^m$ ($m\geq 1$) with a sufficiently smooth boundary
$\partial U$; $\Delta$ is the Laplacian operator on $\R^m$ and $d_1,\ldots,d_n$ are positive constants called the diffusion coefficients;  the map $f:\W\times \bar U\times\R^n\times\R^n\to \R^n$, called the reaction term, with components $f=(f_1,\ldots,f_n)$
satisfies the following condition:
\begin{itemize}
\item[(C)] $f(\w,x,y,\wit y)$ is continuous, and it is Lipschitz in $(y,\wit y)$ in bounded sets, uniformly for $\w\in \W$ and $x\in\bar U$, that is, given any $\rho>0$ there exists an $L_\rho>0$ such that
\[
\|f(\w,x,y_2,\wit y_2)-f(\w,x,y_1,\wit y_1)\|\leq L_\rho\,(\|y_2-y_1\|+\|\wit y_2-\wit y_1\|)
\]
for any $\w\in \W$, $x\in\bar U$ and $y_i,\,\wit y_i\in\R^n$ with $\|y_i\|,\,\|\wit y_i\|\leq \rho\,,\; i=1,2$;
\end{itemize}
$\partial/\partial n$ denotes the
outward normal
derivative at the  boundary; and the boundary conditions are called Dirichlet boundary conditions if $\delta_i=0$ and $\alpha_i\equiv 1$, Neumann  boundary conditions if $\delta_i=1$ and $\alpha_i\equiv 0$,
or Robin boundary conditions if $\delta_i=1$ and $\alpha_i\geq 0$ is sufficiently regular on $\partial U\!$, for $i=1,\ldots,n$.
\par
Let  $C(\bar U)$ be the space of continuous real maps on the closure of $U$, endowed with
the sup-norm, which we just denote by $\|\,{\cdot}\,\|$. If every $\delta_i=1$, that is, with Neumann or Robin boundary conditions, the initial value $\varphi_i$ lies in the space $C([-1,0]\times \bar U)\equiv C([-1,0],C(\bar U))$ of the continuous maps on $[-1,0]$ taking values in $C(\bar U)$, whereas with Dirichlet boundary conditions $\varphi_i$ should in addition satisfy the compatibility condition $\varphi_i(0)\in C_0(\bar U)$, the subspace of $C(\bar U)$ of functions vanishing on $\partial U$.
\par
The former family can be written for short for $y(t,x)=(y_1(t,x),\ldots,y_n(t,x))$ as
\begin{equation}\label{family}
\left\{\begin{array}{l} \des\frac{\partial y}{\partial t}(t,x)=
 D\Delta y(t,x)+f(\w{\cdot}t,x,y(t,x),y(t-1,x))\,,\quad t>0\,,\;\,x\in \bar  U,\\[.2cm]
\bar\alpha(x)\,y(t,x)+\delta\,\des\frac{\partial y}{\partial n}(t,x) =0\,,\quad  t>0\,,\;\,x\in \partial U,\\[.2cm]
 y(s,x)=\varphi(s,x)\,,\quad  s\in [-1,0]\,,\;\,x\in \bar U,
\end{array}\right.
\end{equation}
for each $\w\in\W$, where  $D$ and $\bar\alpha(x)$ respectively stand for the $n\times n$ diagonal
matrices  with entries  $d_1,\ldots,d_n$ and $\alpha_1(x),\ldots,\alpha_n(x)$; $\delta=1$ for  Neumann or Robin boundary conditions and $\delta=0$ for Dirichlet boundary conditions; and $\varphi$ is a given map in the space $C([-1,0],C(\bar
U,\R^n))$, which can be identified with $C([-1,0]\times\bar
U,\R^n)$.
\par
Using results by Martin and Smith~\cite{masm0,masm} and  Travis and Webb~\cite{trwe}, the construction of a locally defined continuous skew-product semiflow linked to time-dependent families of IBV  problems given by systems of parabolic PFDEs with (possibly variable) finite delay has been explained in Novo et al.~\cite{nonuobsa} in the case of Neumann boundary conditions. In fact, the problem with Robin boundary conditions admits a common treatment. Notwithstanding, the problem with Dirichlet boundary conditions is more delicate.
\par
In any case, the main idea is to immerse the family of problems with delay~\eqref{family} into a family of retarded abstract equations in an appropriate Banach space $B$,
\begin{equation}\label{ACPdelay}
\left\{\begin{array}{l} z'(t)  =
 A  z(t)+ F(\w{\cdot}t,z_t)\,,\quad t>0\,,\\
z_0=\varphi\in C([-1,0],B)\,,
\end{array}\right.
\end{equation}
where for each $t\geq 0$, $z_t$ is the map defined by $z_t(s)=z(t+s)$ for $s\in [-1,0]$, and then use the semigroup theory approach. On the  space
of continuous functions $C([-1,0],B)$  the sup-norm will be used and it will be
denoted by $\n{\cdot}_C$.
\par
When it's time to choose a Banach space, it is important to have in mind the kind of results that one wants to obtain. On the one hand, we want a strongly continuous semigroup of operators, so that the induced skew-product semiflow is continuous. On the other hand, in Section~\ref{sec-uniform persistence} we will be working with some strong  monotonicity conditions, so that we need a cone of positive elements in the Banach space with a nonempty interior. For this reason, in the Dirichlet case we skip to work in $C_0(\bar U)$, since the natural cone of positive elements has an empty interior, and we better choose an intermediate space; more precisely, a domain of fractional powers associated to the realization of the Dirichlet Laplacian in $L^p(U)$. Nice sections dedicated to these spaces can be found in Henry~\cite{henr}, Lunardi~\cite{luna} or Pazy~\cite{pazy}.
\par
At this point it seems convenient to present
 the Neumann and Robin cases, and the Dirichlet case separately.
\subsection{The case of Neumann or Robin boundary conditions}\label{sect-Neumann}
In this case for each component $i=1,\ldots,n$ we consider on the space $C(\bar U)$ the differential operator $A_i^0 z_i = d_i\Delta z_i$ with domain $D(A_i^0)$ given by
\[
\left\{z_i\in C^2(U)\cap C^1(\bar U)\;\Big|\; A_i^0z_i\in C(\bar U)\,,\;
\alpha_i(x)\,z_i(x)+\des\frac{\partial z_i}{\partial n}(x)=0\, \;\forall \;x\in \partial U \right\}.
\]
Then, the closure $A_i$ of $A_i^0$ in $C(\bar U)$ is a sectorial operator and it generates an
analytic semigroup of bounded  linear operators $(T_i(t))_{t\geq 0}$, which is usually just written down as  $(e^{tA_i})_{t\geq 0}$,
and $e^{tA_i}$ is compact for any $t>0$ (for instance, see Smith~\cite{smit}). Besides, the semigroup is strongly continuous, that is, $A_i$ is densely defined.
\par
On the product Banach space  $E=C(\bar U)^n\equiv C(\bar
U,\R^n)$ endowed with the norm $\|(z_1,\ldots,z_n)\|=\sum_{i=1}^n\|z_i\|$, we consider the operator $A=\Pi_{i=1}^n  A_i$ with domain $D(A)=\Pi_{i=1}^n D(A_i)$, which is sectorial and generates an analytic semigroup of operators $(e^{tA})_{t\geq 0}$, with  $e^{tA}=\Pi_{i=1}^n e^{tA_i}$, and $e^{tA}$ is compact for any $t>0$.
\par
Let us define $F:\W\times C([-1,0],E)\to E$, $(\w,\varphi)\mapsto F(\w,\varphi)$ by
\begin{equation}\label{F}
F(\w,\varphi)(x)=f(\w,x,\varphi(0,x),\varphi(-1,x))\,,\quad x\in \bar U
\end{equation}
and consider the retarded abstract problems on $E$ given in~\eqref{ACPdelay}. As explained in Novo et al.~\cite{nonuobsa}, with condition (C) on $f$, mild solutions of these ACPs with delay, that is, continuous solutions of the integral equations
\begin{equation}\label{variation constants}
z(t)=e^{tA}\,z(0) +\int_0^t e^{(t-s)A}\,F(\w{\cdot}s,z_s)
\,ds\,,\quad t\geq 0\,,
\end{equation}
permit us to set a locally defined continuous skew-product semiflow
\begin{equation*}
\begin{array}{cccc}
 \tau: &\mathcal{U} \subseteq\R_+\times \W\times C([-1,0],E) & \longrightarrow & \W\times C([-1,0],E)\\
 & (t,\w,\varphi) & \mapsto
 &(\w{\cdot}t,z_t(\w,\varphi))\,,
\end{array}
\end{equation*}
for an appropriate open set $\mathcal{U}$, where as usual  $z_t(\w,\varphi)(s)=z(t+s,\w,\varphi)$ for every $s\in [-1,0]$, for $t\geq 0$. Besides, for any $t>1$ the section map $\tau_t$ is compact, meaning that it takes bounded sets in $\W\times C([-1,0],E)$ into relatively compact sets (see Proposition~2.4 in Travis and Webb~\cite{trwe}), and  if a solution $z(t,\w,\varphi)$ remains bounded, then it is defined on the whole positive real line and the semiorbit of $(\w,\varphi)$ is relatively compact.
\par
It is well-known that we have to impose some extra conditions on the map $f$ in~\eqref{family} in order to gain regularity in the solutions of the associated ACPs~\eqref{ACPdelay}. For completeness, we include the definition of what we call a classical solution. Different names for the same concept are sometimes found in the literature.
\begin{defi}\label{defi-clasica}
A map $z\in C^1((0,T],E)\cap C((0,T],D(A))\cap C([0,T],E)$ which satisfies~\eqref{ACPdelay} for $0<t\leq T$  is a {\em classical solution\/} on $[0,T]$.
\end{defi}
The following condition is often referred to as a {\em time regularity\/} condition.
\begin{itemize}
\item[($C^\theta(t)$)] $f(\w{\cdot}t,x,y,\wit y)$ is $\theta$-H\"{o}lder continuous in $t$ (for some $\theta\in(0,1)$) in bounded sets of $\R^n\times \R^n$ uniformly for  $\w\in \W$ and $x\in\bar U$; that is, given any $r>0$ there exists an $l_r>0$ such that
\[
\|f(\w{\cdot}t,x,y,\wit y)-f(\w{\cdot}s,x,y,\wit y)\|\leq l_r\,|t-s|^\theta\,,\quad t,\,s\geq 0\,,
\]
for any $\w\in \W$, $x\in\bar U$ and $y,\wit y\in\R^n$ with $\|y\|, \|\wit y\|\leq r$\,.
\end{itemize}
\par
We include a short proof of the following result, which follows from Theorem~4.1 in Novo et al.~\cite{nonuobsa}.
\begin{teor}\label{teor-time regularity-Neumann}
Assume conditions $(C)$ and $(C^\theta(t))$, for some $\theta\in(0,1/2)$, on the map $f$ in~\eqref{family}.
Then, for fixed $\w\in\W$ and $\varphi\in C([-1,0],E)$:
\begin{itemize}
\item[(i)] The mild solution of~\eqref{ACPdelay} is classical for $t\geq 1$, provided that it is defined.
\item[(ii)] If $\varphi:[-1,0]\to E$ is $\theta$-H\"{o}lder continuous and besides $\varphi(0)\in C^{2\theta}(\bar  U,\R^n)$,
then the mild solution of~\eqref{ACPdelay} is a classical solution on intervals $[0,T]$ as long as it is defined.
\end{itemize}
\end{teor}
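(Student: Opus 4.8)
\emph{A proof plan.} The plan is to combine the method of steps with the classical regularity theory for abstract parabolic problems (Henry~\cite{henr}, Lunardi~\cite{luna}, Pazy~\cite{pazy}): the whole point is to verify that, along the mild solution $z=z(\cdot,\w,\varphi)$, the forcing $s\mapsto F(\w{\cdot}s,z_s)$ is (locally) H\"older continuous in $t$, which is exactly the ingredient needed to upgrade a mild solution to a classical one. Here $E=C(\bar U,\R^n)$ and $A=\Pi_{i=1}^n A_i$ is the sectorial, densely defined generator from Section~\ref{sect-Neumann}; replacing $A$ by $A-\la I$ for large $\la>0$ if necessary, we may use the fractional powers $(-A)^\al$ together with the standard estimates $\n{(-A)^\al e^{rA}}\le C_\al\, r^{-\al}$ and $\n{(e^{hA}-I)(-A)^{-\al}}\le C_\al\, h^\al$ for $0<\al<1$, $r,h>0$, and the strong continuity of $(e^{tA})_{t\ge0}$. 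We also recall from~\cite{luna,henr} that for $0<2\al<1$ the H\"older space $C^{2\al}(\bar U,\R^n)$ coincides with the interpolation space $D_A(\al,\infty)$ between $E$ and $D(A)$, so that $\psi\in C^{2\al}(\bar U,\R^n)$ is equivalent to $\n{e^{tA}\psi-\psi}=O(t^\al)$ as $t\to0^+$; this is the one place where the hypothesis $\theta<1/2$ is used.

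The basic estimate is the smoothing of the convolution: if $g\colon[0,T]\to E$ is continuous, then $v(t):=\int_0^t e^{(t-s)A}g(s)\,ds$ satisfies $v(t)\in D((-A)^\al)$ for every $\al\in[0,1)$ and $t\in(0,T]$, with $\n{(-A)^\al v(t)}\le C_\al\, t^{1-\al}\sup_{[0,T]}\n{g}$, and $v\in C^\gamma([0,T],E)$ for every $\gamma\in(0,1)$ (split $v(t+h)-v(t)=\int_t^{t+h}e^{(t+h-s)A}g(s)\,ds+\int_0^t(e^{hA}-I)e^{(t-s)A}g(s)\,ds$ and apply the two estimates above). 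Thus even a merely continuous forcing already produces an arbitrarily-H\"older-in-$t$, spatially smooth correction, which is what drives the bootstrap below.

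For part~(i), on $[0,1]$ the delayed value $y(t-1,x)$ is the datum $\varphi(t-1,x)$, so $z$ solves the non-delayed problem $z'=Az+g_1(t,z)$ with $g_1(t,w)(x)=f(\w{\cdot}t,x,w(x),\varphi(t-1,x))$; writing $z(t)=e^{tA}\varphi(0)+v_1(t)$ with $v_1$ the convolution of the continuous map $s\mapsto g_1(s,z(s))$, the smoothing estimate and the analyticity of $t\mapsto e^{tA}\varphi(0)$ on $(0,1]$ give that $z$ is locally H\"older on $(0,1]$ and $z(t)\in D((-A)^\al)$ there. On $[1,2]$ the delayed value is now $z(t-1)$, so, using the previous step together with the Lipschitz condition~(C) and the time regularity $(C^\theta(t))$, the forcing $s\mapsto f(\w{\cdot}s,\cdot,z(s,\cdot),z(s-1,\cdot))$ is locally H\"older on $(1,2]$; the standard regularity theorem then makes $z$ a classical solution on $(1,2]$, and, since $z(1)\in D((-A)^\al)$, $z$ is in fact (uniformly) H\"older on $[1,2]$. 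Iterating over $[k,k+1]$ for $k\ge1$ — from the second step on, the forcing is H\"older up to the left endpoint as well — and patching at the integers (where the forcing is continuous, the convention $z(s)=\varphi(s)$ on $[-1,0]$ matching the delayed argument), one concludes that $z$ is classical for $t\ge1$.

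For part~(ii) the extra hypotheses serve to carry the H\"older regularity of $z$ down to $t=0$: by the interpolation characterization above, $\varphi(0)\in C^{2\theta}(\bar U,\R^n)$ forces $t\mapsto e^{tA}\varphi(0)$ to be $\theta$-H\"older on $[0,1]$, and with $v_1\in C^\gamma([0,1],E)$ this yields $z\in C^\theta([0,1],E)$; since $\varphi$ is assumed $\theta$-H\"older on $[-1,0]$, the map $t\mapsto\varphi(t-1)$ is $\theta$-H\"older on $[0,1]$, so by~(C) and $(C^\theta(t))$ the forcing $s\mapsto g_1(s,z(s))$ is $\theta$-H\"older on all of $[0,1]$; the standard theorem then gives that $z$ is classical on $[0,1]$, and with $z$ H\"older up to $t=1$ the bootstrap of part~(i) now runs with every H\"older estimate valid up to both endpoints at each step, yielding a classical solution on $[0,T]$ for all $T$. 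I expect the only genuinely delicate point to be precisely this bookkeeping of regularity across $t=0$ and the integers — extracting from the analytic semigroup just enough H\"older continuity of $z$ to make the nonlinear delayed forcing H\"older under as little regularity of $\varphi$ as possible — together with keeping the spatial statements inside the range $2\theta<1$; everything else reduces to the cited estimates and to Theorem~4.1 in Novo et al.~\cite{nonuobsa}.
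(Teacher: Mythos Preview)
Your proposal is correct and follows essentially the same route as the paper: verify that the nonlinear forcing $s\mapsto F(\w{\cdot}s,z_s)$ is locally $\theta$-H\"older in $t$ along the mild solution, then invoke the standard regularity theorem for the nonhomogeneous linear problem $z'=Az+g(t)$. The only difference is one of packaging: where you run an explicit method-of-steps over unit intervals and build the H\"older estimate for $z$ by hand from the fractional-power inequalities, the paper cites Lunardi directly for $z\in C^\theta([\ep,T],E)$ for every $\ep>0$ (and, in~(ii), for the equivalence $z\in C^\theta([0,\ep],E)\Leftrightarrow\varphi(0)\in C^{2\theta}(\bar U,\R^n)$ when $\theta<1/2$), which collapses your stepwise bootstrap into a single reference and avoids introducing fractional powers altogether.
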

\begin{proof}
(i) Assume that the mild solution $z(t)=z(t,\w,\varphi)$ of~\eqref{ACPdelay} is defined for $t\in [0,T]$, and let $g(t)=F(\w{\cdot}t,z_t)$ for $t\in [0,T]$. Then, if $T>1$, for any $\varepsilon>0$, it is well-known that $z\in C^\theta([\varepsilon,T],E)$ meaning that it is $\theta$-H\"{o}lder continuous in $t$ (see Lunardi~\cite{luna}), so that under conditions (C) and $(C^\theta(t))$, $g$ is $\theta$-H\"{o}lder continuous on $[1+\varepsilon,T]$.  The classical theory for the nonhomogeneous equation $z'(t)=Az(t)+g(t)$ then says that $z(t)$ is a classical solution on $[1,T]$ (see Henry~\cite{henr} or Lunardi~\cite{luna}).
\par
For (ii), note that with Neumann of Robin boundary conditions, $z\in C^\theta([0,\varepsilon],E)$ if and only if $\varphi(0)\in C^{2\theta}(\bar  U,\R^n)$, provided that $\theta<1/2$ (for instance, see Lunardi~\cite{luna}), and then just argue as before.
\end{proof}
Still an additional condition has to be imposed on $f$ in order to have classical solutions $y(t,x)$ of the IBV problems with delay~\eqref{family}:
\begin{itemize}
\item[$(C^\theta(x))$] $f(\w,x,y,\wit y)$ is $\theta$-H\"{o}lder continuous in $x$ (for some $\theta\in(0,1)$) in bounded sets of $\R^n\times\R^n$ uniformly for  $\w\in \W$; that is, given any $r>0$ there exists an $l_r>0$ such that for any $\w\in \W$ and $y,\wit y\in\R^n$ with $\|y\|, \|\wit y\|\leq r$,
\[
\|f(\w,x_2,y,\wit y)-f(\w,x_1,y,\wit y)\|\leq l_r\,\|x_2-x_1\|^\theta\,,\quad x_1,\,x_2\in \bar U.
\]
\end{itemize}
\par
Note that the classical space where one looks for solutions is $C^{1,2}([a,b]\times \bar U,\R^n)$, for appropriate time intervals $[a,b]$. We are going to use some optimal regularity results of solutions of IBV problems contained in Lunardi~\cite{luna}. Nevertheless, since we are just interested in the  $C^{1,2}$ regularity of solutions, we are not going to pay the due attention to the optimal regularity there proved. 
Some classical references for regularity results are Friedman~\cite{frie} and Ladyzhenskaja et al.~\cite{lasu}.  
\begin{teor}\label{teor-space regularity-Neumann}
Assume conditions $(C)$, $(C^{\theta}(t))$ and $(C^{2\theta}(x))$ on the map $f$ in~\eqref{family}, for some $\theta\in(0,1/2)$, with Neumann or Robin boundary conditions. For  fixed $\w\in\W$ and $\varphi\in C([-1,0],E)$, assume that the mild solution $z(t)=z(t,\w,\varphi)$ is defined on a time interval $[0,T]$  and set $y(t,x)=z(t)(x)$ for $t\in [0,T]$ and $x\in \bar U$, as well as $y(s,x)=\varphi(s,x)$ for $s\in [-1,0]$ and $x\in \bar U$. Then:
\begin{itemize}
\item[(i)] If $T>1$, for any $\varepsilon>0$ the map $y\in C^{1,2}([1+\varepsilon,T]\times \bar U,\R^n)$ is a solution of the IBV problem~\eqref{family} for $1+\varepsilon<t\leq T$.
\item[(ii)] If  $\varphi\in C^{\theta,2\theta}([-1,0]\times \bar U, \R^n)$,
then, for any $\varepsilon>0$,  $y\in C^{1,2}([\varepsilon,T]\times \bar U,\R^n)$ is a solution of the IBV problem~\eqref{family} for $\varepsilon<t\leq T$.
\end{itemize}
\end{teor}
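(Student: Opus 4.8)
\emph{Proof strategy.} The plan is to upgrade the abstract classical solution provided by Theorem~\ref{teor-time regularity-Neumann} to the pointwise $C^{1,2}$ regularity of $y$ in $(t,x)$, by reading each scalar component of~\eqref{family} as a linear non-autonomous parabolic IBV problem with a H\"older continuous forcing term and applying the Schauder-type optimal regularity theory for such problems (Lunardi~\cite{luna}, Ch.~5; see also Friedman~\cite{frie} and Ladyzhenskaja et al.~\cite{lasu}). Write $g(t,x)=f(\w{\cdot}t,x,y(t,x),y(t-1,x))$, so that $g(t)=F(\w{\cdot}t,z_t)$ is precisely the forcing term appearing in the integral equation~\eqref{variation constants}, and, formally, each component satisfies $\partial_t y_i=d_i\Delta y_i+g_i(t,x)$ with the prescribed Neumann or Robin boundary condition and $y_i(a,\cdot)=z_i(a)$ on the relevant subinterval. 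Since the diffusion matrix is diagonal, it suffices to treat each scalar equation separately; the argument is then carried out by the method of steps on the intervals $[0,1],[1,2],\dots$, feeding in at each stage the regularity of the (delayed) solution already obtained on the previous interval.

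\emph{H\"older regularity of $g$ in time.} By Theorem~\ref{teor-time regularity-Neumann}, in case~(i) $z$ is classical on $[1,T]$ and, by standard abstract parabolic regularity (Lunardi~\cite{luna}), $z\in C^\theta([\ep,T],E)$ for every $\ep>0$; in case~(ii) the hypothesis $\varphi\in C^{\theta,2\theta}([-1,0]\times\bar U,\R^n)$ — which in particular makes $\varphi:[-1,0]\to E$ be $\theta$-H\"older with $\varphi(0)\in C^{2\theta}(\bar U,\R^n)$ — gives via Theorem~\ref{teor-time regularity-Neumann}(ii) that $z$ is classical on $[0,T]$ and, moreover, $z\in C^\theta([0,T],E)$. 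In either case the delayed map $t\mapsto z(t-1)$ is $\theta$-H\"older on the interval under consideration: in case~(i) this is what forces the shift to $t\ge 1+\ep$, since $z$ is only known to be $\theta$-H\"older away from $t=0$; in case~(ii) the junction at $t=1$ is harmless because $\varphi$ is $\theta$-H\"older on $[-1,0]$ and matches $z$ at $0$. Combining this with conditions~(C) and $(C^\theta(t))$ yields that $g$ is $\theta$-H\"older continuous in $t$, uniformly for $x\in\bar U$, on the relevant interval.

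\emph{H\"older regularity of $g$ in space, and conclusion.} The crucial point is that $z(t)$ takes values in $C^{2\theta}(\bar U,\R^n)$ with norm bounded uniformly for $t$ in compact subintervals. This follows from the variation of constants formula~\eqref{variation constants} together with the smoothing estimate $\n{e^{tA}}_{\mathcal L(E,E_\al)}\le M\,t^{-\al}$ for the fractional power space $E_\al$ associated with $A$, and the continuous embedding $E_\al\hookrightarrow C^{2\al}(\bar U,\R^n)$, valid for $2\al<1$ (Henry~\cite{henr}); taking $\al=\theta$, the integrable singularity $t^{-\theta}$ makes the integral term land in $C^{2\theta}$, while the term $e^{(t-a)A}z(a)$ is controlled either by the same smoothing (for $t\ge a+\ep$, case~(i)) or directly since $z(a)=\varphi(0)\in C^{2\theta}$ (case~(ii)). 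Hence $\sup_t\n{z(t)}_{C^{2\theta}}<\infty$ on the relevant interval, and combining with (C) and $(C^{2\theta}(x))$ shows that $g(t,\cdot)$ is $2\theta$-H\"older in $x$, uniformly in $t$; thus $g$ belongs to the parabolic H\"older space $C^{\theta,2\theta}$ on the corresponding space-time rectangle. For each $i$, $y_i$ is then the solution of a linear parabolic IBV problem with $C^{\theta,2\theta}$ right-hand side and initial datum in $D(A_i)$ (case~(i)) or in $C^{2\theta}(\bar U)$ (case~(ii)); the Schauder estimates quoted above give $y_i\in C^{1+\theta,2+2\theta}$ away from the initial time, in particular $y_i\in C^{1,2}$, and $y_i$ solves~\eqref{family} classically. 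The extra $\ep$ reflects the absence of higher-order compatibility of the datum at the initial time; reassembling the components gives the statement, with the interval $[1+\ep,T]$ in~(i) and $[\ep,T]$ in~(ii).

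\emph{Main obstacle.} The delicate step is the passage from the semigroup regularity, naturally measured on the scale of fractional power spaces $E_\al$, to genuine H\"older regularity in the spatial variable, uniformly in time and including the contribution of the delayed term in~\eqref{variation constants}. This hinges on the embedding properties of $E_\al$ for the sup-norm realizations $A_i$ of the Laplacian under Neumann or Robin boundary conditions and on a careful bookkeeping of the singularities $t^{-\al}$ in the convolution term. Once $g\in C^{\theta,2\theta}$ has been established, the remaining step is a direct application of the classical parabolic Schauder theory.
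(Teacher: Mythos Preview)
Your argument is correct and follows the paper's route: establish that the forcing term $g$ (the paper calls it $h$) lies in the parabolic H\"older class $C^{\theta,2\theta}$ on the relevant time interval, then feed the linear scalar problems $\partial_t y_i=d_i\Delta y_i+g_i$ into the Schauder theory (the paper invokes Proposition~7.3.3(iii) in Lunardi~\cite{luna}), with the $\varepsilon$-loss coming from the lack of higher compatibility at the initial time. The only difference is cosmetic: the paper obtains $y\in C^{\theta,2\theta}([\delta,T]\times\bar U)$ in one stroke by citing Theorem~5.1.17 in~\cite{luna}, while you reproduce this step by hand from~\eqref{variation constants} via the smoothing $\n{e^{tA}}_{\mathcal L(E,E_\theta)}\le Mt^{-\theta}$ --- just note that for the sup-norm Neumann/Robin realization the embedding $E_\theta\hookrightarrow C^{2\theta}(\bar U,\R^n)$ is not in Henry~\cite{henr} (who works in $L^p$) but follows from $D((-A)^\theta)\subset D_A(\theta,\infty)=C^{2\theta}(\bar U)$ for $\theta<1/2$ (Lunardi~\cite{luna}, Ch.~3).
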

\begin{proof}
For the continuous map $h(t,x)=f(\w{\cdot}t,x,y(t,x),y(t-1,x))$, $(t,x)\in [0,T]\times\bar U$, we consider the IBV problem on $[0,T]\times\bar U$,
\begin{equation}\label{auxiliar-robin}
\left\{\begin{array}{l} \des\frac{\partial y}{\partial t}(t,x)=
 D\Delta y(t,x)+h(t,x)\,,\quad 0<t\leq T,\;\,x\in \bar U,\\[.2cm]
\bar\alpha(x)\,y(t,x)+\des\frac{\partial y}{\partial n}(t,x) =0\,,\quad   0<t\leq T,\;\,x\in \partial U,\\[.2cm]
 y(0,x)=\varphi(0,x)\,, \quad x\in \bar U.
\end{array}\right.
\end{equation}
\par
Note that in both items $y(t,x)=z(t)(x)$ is $C^1$ in $t$ because $z(t)$ is a classical solution, by   Theorem~\ref{teor-time regularity-Neumann}. Therefore, it remains to check the $C^2$ regularity in $x$.
\par
By Theorem~5.1.17 in~\cite{luna}, $y(t,x)\in C^{\theta,2\theta}([\delta,T]\times \bar U)$ for any $\delta>0$  (condition (C) on $f$ is enough for this). Then, fixed $\varepsilon>0$, $h(t,x)\in C^{\theta,2\theta}([1+\frac{\varepsilon}{2},T]\times \bar U)$, and from Theorem~\ref{teor-time regularity-Neumann},   $z(1+\frac{\varepsilon}{2})\in D(A)$, so that the boundary condition is fulfilled at $t=1+\frac{\varepsilon}{2}$.
Then, we can apply Proposition~7.3.3~(iii) in~\cite{luna} to the IBV problem~\eqref{auxiliar-robin} for $1+\frac{\varepsilon}{2}<t\leq T$ with initial condition $y(1+\frac{\varepsilon}{2},x)=z(1+\frac{\varepsilon}{2})(x)$ for  $x\in \bar U$,
 to deduce that $y\in C^{1,2}([1+\varepsilon,T]\times \bar U,\R^n)$. The proof of (i) is finished. 
\par
Recall that $\varphi(0)\in C^{2\theta}(\bar U,\R^n)$ is the necessary and sufficient condition to guarantee that $y(t,x)\in C^{\theta,2\theta}([0,T]\times \bar U)$. With the assumption $\varphi\in C^{\theta,2\theta}([-1,0]\times \bar U, \R^n)$ in~(ii),  $h(t,x)\in C^{\theta,2\theta}([0,T]\times \bar U)$. Arguing as in the previous paragraph,  we can deduce that $y\in C^{1,2}([\varepsilon,T]\times \bar U,\R^n)$ for any $\varepsilon>0$. The proof is finished.
\end{proof}
\subsection{The case of Dirichlet boundary conditions}\label{sect-Dirichlet}
This time, for each component $i=1,\ldots,n$ we consider on the space $C(\bar U)$ the differential operator $A_i^0 z_i = d_i\Delta z_i$ with domain $D(A_i^0)=\{z_i\in C^2(U)\cap C_0(\bar U)\;|\; A_i^0z_i\in C_0(\bar U)\}$.
The closure $A_i$ of $A_i^0$ in $C(\bar U)$ is a sectorial operator which generates an
analytic semigroup of bounded  linear operators $(e^{tA_i})_{t\geq 0}$, with  $e^{tA_i}$ compact for any $t>0$ (see Lunardi~\cite{luna} or Smith~\cite{smit}), but now the semigroup is not strongly continuous, since
$\overline{ D(A_i)}=C_0(\bar U)$. As in Section \ref{sect-Neumann},  $A=\Pi_{i=1}^n  A_i$ is the sectorial operator with domain $D(A)=\Pi_{i=1}^n D(A_i)$ on the product Banach space  $E=C(\bar U)^n\equiv C(\bar
U,\R^n)$, and $e^{tA}$ is compact for any $t>0$.
\par
In this case, we also consider for each component $i$ the realization of the Dirichlet $d_i$-Laplacian on the Banach space $L^p(U)$ for a fixed $m<p<\infty$, that is, the operator $A_{i,p}:D(A_{i,p})\subset L^p(U)\to L^p(U)$ 
defined by $A_{i,p}z_i=d_i\Delta z_i$ (in a weak sense) for $z_i\in D(A_{i,p})$. This operator is sectorial, densely defined and $0\in\rho(A_{i,p})$. Then, for $\alpha\in (1/2+m/(2p),1)$, let $E_i^\alpha:=D(-A_{i,p})^\alpha=\rg (-A_{i,p})^{-\alpha}$ be the domain of fractional power $\alpha$ of $-A_{i,p}$, which is a Banach space with norm
$\|z_i\|_\alpha = \|(-A_{i,p})^{\alpha}\,z_i\|_p$
and satisfies $E_i^\alpha\hookrightarrow C^1(\bar U)$ (see Theorem~1.6.1 in Henry~\cite{henr}). Besides,
$E_i^\alpha$ is an intermediate space in the class $J_\alpha$ between $L^p(U)$ and $D(A_{i,p})$, that is, we have continuous embeddings
$D(A_{i,p}) \hookrightarrow E_i^\alpha \hookrightarrow L^p(U)$
and there exists a constant $c_i>0$ such that $
\|z_i\|_\alpha\leq c_i\,\|A_{i,p}z_i\|_p^\alpha\,\|z_i\|_p^{1-\alpha}$ for any $z_i\in D(A_{i,p})$.
Also the following estimate holds, which will be used later on:
\begin{equation}\label{estimate}
\| (-A_{i,p})^\alpha\,e^{t A_{i,p}}\|_{\mathcal{L}(L^p(U))}\leq M_\alpha\,t^{-\alpha}\,e^{-w t}\,,\quad t>0\, ,
\end{equation}
for some $w>0$ and $M_\alpha>0$ (see Theorem~6.13 in Pazy~\cite{pazy}).
\par
In all, $D(A_i)\hookrightarrow D(A_{i,p})\hookrightarrow E_i^\alpha \hookrightarrow C^1(\bar U) \hookrightarrow C(\bar U) \hookrightarrow L^p(U)$
and $E_i^\alpha$ as an intermediate space  in the class $J_\alpha$ between $C(\bar U)$ and $D(A_i)$.
\par
This time, we consider on the product Banach space $L^p(U)^n\equiv L^p(
U,\R^n)$ with norm $\|(z_1,\ldots,z_n)\|_p=\sum_{i=1}^n \|z_i\|_p$, the operator $A_p=\Pi_{i=1}^n  A_{i,p}$ with domain $D(A_p)=\Pi_{i=1}^n D(A_{i,p})$ and the bounded linear operator $(-A_p)^{-\alpha}=\Pi_{i=1}^n (-A_{i,p})^{-\alpha}$. We also consider the product Banach space $E^\alpha=\Pi_{i=1}^n  E_i^\alpha$ endowed with the norm $\|(z_1,\ldots,z_n)\|_\alpha=\sum_{i=1}^n\|z_i\|_\alpha$. Thanks to H\"{o}lder's inequality, it is immediate to check that $E^\alpha$ is an intermediate Banach space between $E$ and $D(A)$ in the class $J_\alpha$.
In fact, because of the continuous embeddings, $(e^{t A})_{t\geq 0}$ is an analytic semigroup of bounded linear operators on $E^\alpha$ and besides in this case:
\begin{equation}\label{sup finito}
\limsup_{t\to 0^+} \|e^{tA}\|_{\mathcal{L}(E^\alpha)}<\infty\,.
\end{equation}
Furthermore, it is easy to check that the semigroup of operators $(e^{t A})_{t\geq 0}$ is strongly continuous on $E^\alpha$, that is, $D(A)$ is dense in $E^\alpha$: just take any $z\in E^\alpha$, that is, $z=(-A_{p})^{-\alpha}\,y$ for some $y\in L^p(U,\R^n)$ and since $(-A_{p})^{-\alpha}$ commutes with $e^{tA_{p}}$, $\|e^{t A}z-z\|_\alpha= \|e^{t A_p}(-A_{p})^{-\alpha}\,y- (-A_{p})^{-\alpha}\,y\|_\alpha=\|e^{t A_p}y-y\|_p\to 0$
as $t\to 0^+$, since $(e^{t A_p})_{t\geq 0}$ is strongly continuous in $L^p(U,\R^n)$. In particular, $E^\alpha\hookrightarrow\overline{ D(A)}=C_0(\bar U,\R^n)$.
Finally,  $e^{t A}:E^\alpha \to E^\alpha$ is compact for any $t>0$. This follows from $E^\alpha \hookrightarrow E$, the compactness of $e^{(t/2) A}:E\to E$ and the boundedness of $e^{(t/2) A}:E\to E^\alpha$ because $E^\alpha$ is an intermediate space in the class $J_\alpha$.
\par
On this occasion, we consider $F:\W\times C([-1,0],E^\alpha)\to E$, $(\w,\varphi)\mapsto F(\w,\varphi)$ defined as in~\eqref{F}, and the retarded ACPs on $E^\alpha$ given in~\eqref{ACPdelay}. Although there are some results for these problems in the $\alpha$-norm (e.g., see Travis and Webb~\cite{trwe78}),  here we opt to apply the ``method of steps" to get existence and uniqueness of mild solutions of~\eqref{ACPdelay}, arguing on $[0,1]$ first, then on $[1,2]$, and so on. In this way we can apply the well-established theory for semilinear ACPs with nonlinearities defined in intermediate spaces (for instance, see Chapter~7 in Lunardi~\cite{luna}). So, for fixed $\w\in \W$ and $\varphi\in C([-1,0],E^\alpha)$, let us define the map $
\wit F:[0,1]\times E^\alpha \to E$
by $\wit F(t,v)(x)=f(\w{\cdot}t,x,v(x),\varphi(t-1,x))$ for any $x\in
\bar U$, for the map $f$ in~\eqref{family}. It is easy to check that condition (C) on $f$ is transferred to the map $\wit F$, in the sense that $\wit F$ is continuous and it is Lipschitz in $v$ in bounded sets of $E^\alpha$, uniformly for $t\in [0,1]$; that is, given $R>0$ there exists a $C_R>0$ such that for $t\in[0,1]$,
\begin{equation}\label{lipsch v}
\|\wit F(t,v_2)-\wit F(t,v_1)\|\leq C_R\,\|v_2-v_1\|_\alpha\quad\hbox{for any }\; \|v_1\|_\alpha, \|v_2\|_\alpha\leq R\,.
\end{equation}
\par
With these conditions, the standard theory for the semilinear ACP in $E^\alpha$,
\begin{equation}\label{ACP}
\left\{\begin{array}{l} z'(t)  =
 A  z(t)+ \wit F(t,z(t))\,,\quad t>0\,,\\
z(0)=\varphi(0)\in  E^\alpha\,,
\end{array}\right.
\end{equation}
with $A$ sectorial and densely defined, says that the problem admits a unique mild solution $z=z(t,\w,\varphi)\in C([0,\delta],E^\alpha)$ for a certain  $\delta=\delta(\w,\varphi)\in (0,1]$, that is, $z$ is a continuous solution of the integral equation
\begin{equation*}
z(t)=e^{tA}\,z(0) +\int_0^t e^{(t-s)A}\,\wit F(s,z(s))
\,ds\,,\quad t\in [0,\delta]\,.
\end{equation*}
Compare with~\eqref{variation constants} to see that $z$ is also a mild solution of~\eqref{ACPdelay} on  $[0,\delta]$.
\par
Whenever the mild solution is globally defined on $[0,1]$, then we consider the ACP~\eqref{ACP} on $[1,2]$ with
$\wit F(t,v)(x)=f(\w{\cdot}t,x,v(x),z(t-1,\w,\varphi)(x))$ for any $x\in \bar U$, 
and $z(1)=z(1,\w,\varphi)\in  E^\alpha$. Now $\wit F$ is continuous and satisfies~\eqref{lipsch v} on $[1,2]$, and once more the problem admits a unique mild solution. Both solutions stuck together give the mild solution on $[0,1+\delta']$, and note once more that $z$ is a mild solution of~\eqref{ACPdelay} too. This procedure can be iterated, as long as the mild solution is defined.
\par
Standard arguments using  a generalized version of the Gronwall's lemma (see Lemma~7.1.1 in Henry~\cite{henr}) permit to see that the mild solution $z(t,\w,\varphi)$ depends continuously on the initial condition $\varphi$, and also on $\w\in\W$ (note that the map $F$ depends on both $\w$ and  $\varphi$).
Therefore,  mild solutions of the ACPs permit us to set a locally defined continuous skew-product semiflow
\begin{equation*}
\begin{array}{cccc}
 \tau: &\mathcal{U} \subseteq\R_+\times \W\times C([-1,0],E^\alpha) & \longrightarrow & \W\times C([-1,0],E^\alpha)\\
 & (t,\w,\varphi) & \mapsto
 &(\w{\cdot}t,z_t(\w,\varphi))\,,
\end{array}
\end{equation*}
for an appropriate open set $\mathcal{U}$, where $z_t(\w,\varphi)(s)=z(t+s,\w,\varphi)$ for every $s\in [-1,0]$. Also here, if a solution $z(t,\w,\varphi)$ remains bounded, then it is defined on the whole positive real line and the semiorbit of $(\w,\varphi)$ is relatively compact: see the arguments in the proof of
Proposition~3.1 in Novo et al.~\cite{nonuobsa}.
\par
Note that we can also consider $F:\W\times C([-1,0],E)\to E$ defined as in~\eqref{F}, and solve the retarded ACP~\eqref{ACPdelay} for any $\varphi\in C([-1,0],E)$ (with $\varphi(0)\in C_0(\bar U,\R^n)$ if continuity of the mild solution up to $t=0$ is wanted). Then, since $E^\alpha$ is an intermediate space between $E$ and $D(A)$, from~\eqref{variation constants} it follows that  for $t>1$, $z_t(\w,\varphi)\in C([-1,0],E^\alpha)$ (see Proposition~4.2.1 in Lunardi~\cite{luna}).
In fact, one can prove that for $t>1$, the section semiflow $\tau_t:\W\times C([-1,0],E)\to \W\times C([-1,0],E^\alpha)$ is compact on its domain: argue as in Proposition~2.4 in Travis and Webb~\cite{trwe}.
\par
We finish this section with some results on regularity of solutions. A classical solution is defined  exactly  as in Definition~\ref{defi-clasica}. 
\begin{teor}\label{teor-time regularity-Dirichlet}
Assume conditions $(C)$ and $(C^\theta(t))$, for some $\theta\in(0,1/2)$, on the map $f$ in~\eqref{family}.
Then, for fixed $\w\in\W$ and $\varphi\in C([-1,0],E^\alpha)$:
\begin{itemize}
\item[(i)] The mild solution of~\eqref{ACPdelay} is classical for $t\geq 1$, provided that it is defined.
\item[(ii)] If $\varphi:[-1,0]\to E$ is $\theta$-H\"{o}lder continuous, then the mild solution of~\eqref{ACPdelay} is a classical solution on intervals  $[0,T]$ as long as it is defined.
\end{itemize}
\end{teor}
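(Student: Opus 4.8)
The plan is to mimic the strategy of the proof of Theorem~\ref{teor-time regularity-Neumann}, but working throughout in the intermediate space $E^\alpha$ rather than in $E$, exploiting that the method of steps has already localized the nonlinearity. Fix $\w\in\W$ and $\varphi\in C([-1,0],E^\alpha)$, and suppose the mild solution $z(t)=z(t,\w,\varphi)$ of~\eqref{ACPdelay} is defined on $[0,T]$. Set $g(t)=F(\w{\cdot}t,z_t)$, so that $z$ solves the nonhomogeneous ACP $z'(t)=Az(t)+g(t)$ with $A$ sectorial and densely defined on $E^\alpha$ (the density and the analyticity of $(e^{tA})_{t\ge0}$ on $E^\alpha$ were established in Section~\ref{sect-Dirichlet}). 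The whole argument reduces to showing that $g$ is locally $\theta$-H\"older continuous on the appropriate subinterval, after which the classical regularity theory for analytic semigroups (Henry~\cite{henr}, Lunardi~\cite{luna}) upgrades $z$ to a classical solution there.

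For part~(i), fix $\varepsilon>0$ with $T>1+\varepsilon$. The standard smoothing estimate for analytic semigroups — applied on $E^\alpha$, using that the map $t\mapsto g(t)$ is merely continuous and bounded on $[0,T]$ together with estimate~\eqref{estimate} — gives that $z\in C^\theta([\varepsilon,T],E^\alpha)$; one can cite Proposition~4.2.1 in Lunardi~\cite{luna} or the corresponding result in Henry~\cite{henr}. Consequently $t\mapsto z(t-1)\in C^\theta([1+\varepsilon,T],E^\alpha)$, and since $E^\alpha\hookrightarrow C^1(\bar U,\R^n)\hookrightarrow C(\bar U,\R^n)$, both the non-delayed argument $z(t)(x)$ and the delayed argument $z(t-1)(x)$ are $\theta$-H\"older in $t$, uniformly in $x\in\bar U$, with values in a bounded set. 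Condition~(C) (Lipschitz in the state variables) together with $(C^\theta(t))$ (H\"older in $t$) then forces $g(t)=F(\w{\cdot}t,z_t)$ to be $\theta$-H\"older on $[1+\varepsilon,T]$ as a map into $E$; because $E^\alpha$ is intermediate in the class $J_\alpha$ between $E$ and $D(A)$, the same H\"older continuity holds with values in $E^\alpha$ after a further short time (or one simply invokes the $E$-valued version of the classical theorem and then bootstraps using the embedding). Applying the classical theorem for $z'=Az+g$ on $[1+\varepsilon,T]$ and letting $\varepsilon\downarrow0$ yields that $z$ is classical for $t\ge1$, since $\varepsilon$ is arbitrary.

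For part~(ii), the only new input is controlling the behaviour near $t=0$. Here we use that $\varphi(0)\in E^\alpha$ by hypothesis, so that $z(0)=\varphi(0)\in E^\alpha$ and hence $t\mapsto e^{tA}z(0)$ is already $\theta$-H\"older (indeed Lipschitz on compact time intervals, since $z(0)\in D(A)$ would give more, but membership in $E^\alpha$ suffices for the $J_\alpha$-estimate). Combined with the assumption that $\varphi:[-1,0]\to E$ is $\theta$-H\"older — which makes $t\mapsto z(t-1)$ H\"older on $[0,1]$ — and with the interior smoothing on $(0,T]$, one obtains $z\in C^\theta([0,T],E^\alpha)$, whence $g$ is $\theta$-H\"older on $[0,T]$ and the classical theory applies on all of $[0,T]$ exactly as in part~(i). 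The main obstacle, and the point deserving the most care, is the first one: the passage from $E$-valued H\"older continuity of $g$ to $E^\alpha$-valued regularity of the mild solution, i.e.\ verifying that the convolution $\int_0^t e^{(t-s)A}g(s)\,ds$ lands in $C^\theta([\varepsilon,T],E^\alpha)$ when $g\in C([0,T],E)$. This rests on the singular estimate~\eqref{estimate} (equivalently, on $E^\alpha$ being in the class $J_\alpha$) and is precisely the technical content of Proposition~4.2.1 in Lunardi~\cite{luna}; everything else is a transcription of the Neumann/Robin argument with $E$ replaced by $E^\alpha$.
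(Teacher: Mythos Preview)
Your parenthetical remark --- invoking the $E$-valued classical theorem directly --- is correct and is in fact the paper's approach; the main line of your argument, however, contains an error and is unnecessarily roundabout. The claim that $g$ becomes H\"older \emph{into $E^\alpha$} ``after a further short time'' is false: $g(t)=F(\w{\cdot}t,z_t)$ takes values only in $E=C(\bar U,\R^n)$, since $f(\w,\cdot,\varphi(0,\cdot),\varphi(-1,\cdot))$ need not vanish on $\partial U$ nor possess the regularity required for membership in $E^\alpha$. No amount of waiting changes that.

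The paper's proof is a two-line remark referring back to Theorem~\ref{teor-time regularity-Neumann}, working entirely in $E$. The classical theory for $z'=Az+g$ with $A$ sectorial (possibly not densely defined) on $E$ and $g\in C^\theta$ into $E$ already gives that $z$ is classical in the sense of Definition~\ref{defi-clasica}, which is formulated in $E$ and $D(A)$, not in $E^\alpha$. The only new point in the Dirichlet case concerns part~(ii): with Dirichlet boundary conditions one has $z\in C^\theta([0,\varepsilon],E)$ if and only if $\varphi(0)\in C_0^{2\theta}(\bar U,\R^n)$, and since $\varphi(0)\in E^\alpha\hookrightarrow C^1(\bar U,\R^n)\cap C_0(\bar U,\R^n)$ and $2\theta<1$, this condition is automatic. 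Your detour through $E^\alpha$-valued H\"older estimates for $z$, and the attempted bootstrap of $g$, buys nothing and introduces the false step above; drop it and keep only the parenthetical argument.
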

\begin{proof}
The proof follows the same lines as that of Theorem~\ref{teor-time regularity-Neumann}. Just recall that $E^\alpha\hookrightarrow\overline{ D(A)}=C_0(\bar U,\R^n)$, and that with Dirichlet boundary conditions the mild solution $z\in C^\theta([0,\varepsilon],E)$ if and only if $\varphi(0)\in C_0^{2\theta}(\bar U,\R^n)$ (see~\cite{luna}). Since $\varphi(0)\in E^\alpha\hookrightarrow C^1(\bar U,\R^n)$, and $\theta<1/2$,  we can give it for granted. The proof is finished.
\end{proof}
\begin{teor}\label{teor-space regularity-Dirich}
Assume conditions $(C)$, $(C^{\theta}(t))$ and $(C^{2\theta}(x))$ on the map $f$ in~\eqref{family},  for some $\theta\in(0,1/2)$, with Dirichlet boundary conditions. 
For  fixed $\w\in\W$ and $\varphi\in C([-1,0],E^\alpha)$, assume that the mild solution $z(t)=z(t,\w,\varphi)$ is defined on a time interval $[0,T]$ and set $y(t,x)=z(t)(x)$ for $t\in [0,T]$ and $x\in \bar U$, as well as $y(s,x)=\varphi(s,x)$ for $s\in [-1,0]$ and $x\in \bar U$. Then:
\begin{itemize}
\item[(i)] If $T>1$, for any $\varepsilon>0$ the map $y\in C^{1,2}([1+\varepsilon,T]\times \bar U,\R^n)$ is a solution of the IBV problem~\eqref{family} for $1+\varepsilon<t\leq T$.
\item[(ii)] If  $\varphi\in C^\theta([-1,0], E)$,  
then for any $\varepsilon>0$ the map $y\in C^{1,2}([\varepsilon,T]\times \bar U,\R^n)$ is a solution of the IBV problem~\eqref{family} for $\varepsilon<t\leq T$. 
\end{itemize}
\end{teor}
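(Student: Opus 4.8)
The plan is to mirror the proof of Theorem~\ref{teor-space regularity-Neumann}, adapting it to the Dirichlet setting. First I would freeze the reaction term along the given solution: set $h(t,x)=f(\w{\cdot}t,x,y(t,x),y(t-1,x))$, which is continuous on $[0,T]\times\bar U$ since $f$ is continuous and $y$ is continuous. Then $y(t,x)=z(t)(x)$ solves the \emph{linear} IBV problem $\partial_t y=D\Delta y+h(t,x)$ with homogeneous Dirichlet boundary conditions and initial datum $\varphi(0)$. By Theorem~\ref{teor-time regularity-Dirichlet} the mild solution $z$ is already classical, so $y$ is $C^1$ in $t$ --- for $t\ge 1$ in case~(i) and on all of $[0,T]$ in case~(ii), using there that $\varphi$ is $\theta$-H\"older and $\varphi(0)\in E^\alpha\hookrightarrow C_0(\bar U,\R^n)$. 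Hence the only thing left to establish is the $C^2$-regularity in $x$.

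For that I would run the same bootstrap as in the Neumann/Robin case, now based on the optimal parabolic regularity results in Lunardi~\cite{luna} for the Dirichlet realization. Step one: interior-in-time regularity (Theorem~5.1.17 in~\cite{luna}), which needs only condition~(C), gives $y\in C^{\theta,2\theta}([\delta,T]\times\bar U,\R^n)$ for every $\delta>0$; combining this with conditions $(C^{\theta}(t))$ and $(C^{2\theta}(x))$ and the Lipschitz bound in~(C) one gets $h\in C^{\theta,2\theta}([1+\varepsilon/2,T]\times\bar U,\R^n)$. Step two: the compatibility at the restart time $t=1+\varepsilon/2$ is granted by Theorem~\ref{teor-time regularity-Dirichlet}(i), which yields $z(1+\varepsilon/2)\in D(A)$, so the Dirichlet boundary condition holds there. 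Step three: apply Proposition~7.3.3(iii) in~\cite{luna} to the linear Dirichlet problem restarted at $t=1+\varepsilon/2$ with H\"older source $h$ to conclude $y\in C^{1,2}([1+\varepsilon,T]\times\bar U,\R^n)$. This proves~(i). For~(ii) I would observe that $\varphi(0)\in E^\alpha\hookrightarrow C^1(\bar U,\R^n)$ (Theorem~1.6.1 in Henry~\cite{henr}) together with $\varphi(0)\in C_0(\bar U,\R^n)$ forces $\varphi(0)\in C_0^{2\theta}(\bar U,\R^n)$ because $2\theta<1$, and this is precisely the necessary and sufficient condition for $y\in C^{\theta,2\theta}([0,T]\times\bar U,\R^n)$; hence $h\in C^{\theta,2\theta}([0,T]\times\bar U,\R^n)$ and the same three steps, run from $t=0$, give $y\in C^{1,2}([\varepsilon,T]\times\bar U,\R^n)$ for every $\varepsilon>0$.

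The step I expect to be the main obstacle is passing from the Neumann/Robin bookkeeping to Dirichlet without imposing extra spatial regularity on $\varphi$: Theorem~\ref{teor-space regularity-Neumann}(ii) assumes $\varphi\in C^{\theta,2\theta}$, while here the hypothesis is only $\varphi\in C^\theta([-1,0],E)$, the point being that the phase space $C([-1,0],E^\alpha)$ already encodes spatial smoothness and the vanishing-on-$\partial U$ condition at $s=0$ through the embeddings $D(A)\hookrightarrow E^\alpha\hookrightarrow C^1(\bar U,\R^n)\cap C_0(\bar U,\R^n)$. I would therefore be careful to check (a) that the interpolation-space characterization $D_A(\theta,\infty)=C_0^{2\theta}(\bar U)$ for $\theta\in(0,1/2)$ is the right one to feed into Lunardi's theorems for the $C(\bar U)$-realization of the Dirichlet Laplacian, and (b) that those optimal-regularity statements, established in~\cite{luna} for sectorial operators that need not be densely defined, apply to the operator $A$ on $E$ (it \emph{is} densely defined on $C_0(\bar U,\R^n)$ and on $E^\alpha$, which is what the argument uses). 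Once these points are settled, the reasoning is essentially identical to the Neumann/Robin case.
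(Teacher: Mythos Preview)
Your plan is correct and matches the paper's proof almost line for line; the only adjustments are that the Dirichlet analogues in Lunardi are Theorem~5.1.11 and Proposition~7.3.2(iii) (not 5.1.17 and 7.3.3(iii), which are the Neumann/Robin statements), and in~(ii) you must use $\varphi(s)\in E^\alpha\hookrightarrow C^{2\theta}(\bar U,\R^n)$ for \emph{every} $s\in[-1,0]$ (not just $s=0$) so that the delayed argument $y(t-1,\cdot)$ in $h$ inherits the needed spatial H\"older regularity on $[0,1]$. With those two fixes your argument is the paper's.
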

\begin{proof}
The proof follows the same lines as that of Theorem~\ref{teor-space regularity-Neumann}.
For the continuous map $h(t,x)=f(\w{\cdot}t,x,y(t,x),y(t-1,x))$ for $(t,x)\in [0,T]\times\bar U$, consider the IBV problem~\eqref{auxiliar-robin}  but with boundary condition $y(t,x)=0$ for $0<t\leq T$, $x\in \partial U$.
\par
This time by Theorem~5.1.11 in~\cite{luna}, $y(t,x)\in C^{\theta,2\theta}([\varepsilon,T]\times \bar U)$ for any $\varepsilon>0$. Since we are working on $E^\alpha$ with $\alpha>1/2$,  $\varphi(0)\in E^\alpha\hookrightarrow C^{2\theta}_0(\bar U,\R^n)$ and then in fact $y(t,x)\in C^{\theta,2\theta}([0,T]\times \bar U)$.
Therefore, the map $h(t,x)\in C^{\theta,2\theta}([1,T]\times \bar U)$. Besides, since for any $t\geq 0$, $z(t)\in E^\alpha \hookrightarrow C_0(\bar U,\R^n)$, we have  that $z(t)(x)=0$ for any $t\geq 0$ and any $x\in \partial U$. 
With these conditions we can apply Proposition~7.3.2~(iii) in~\cite{luna} 
to the IBV problem for $1<t\leq T$ with initial condition $y(1,x)=z(1)(x)$ for  $x\in \bar U$, to get that $y\in C^{1,2}([1+\varepsilon,T]\times \bar U,\R^n)$ and (i) is proved.
\par
With the assumptions in (ii), and the fact that $\varphi(s)\in E^\alpha\hookrightarrow C^{2\theta}(\bar U,\R^n)$ for any $s\in [-1,0]$, now  $h(t,x)\in C^{\theta,2\theta}([0,T]\times \bar U)$. 
 Once more Proposition~7.3.2~(iii) in~\cite{luna} implies that $y\in C^{1,2}([\varepsilon,T]\times \bar U,\R^n)$.  The proof is finished.
\end{proof}
\subsection{Monotone skew-product semiflows induced by quasimonotone parabolic PFDEs}\label{sec-monotonos}
In this section we are concerned with the classical quasimonotone condition which renders the skew-product semiflow induced by mild solutions monotone. We state this result, together with a technical inequality which will be fundamental in Section~\ref{sec-uniform persistence}.
\par
First of all, we describe the cones of positive vectors in the spaces we are dealing with. In the case of Neumann or Robin boundary conditions, $C([-1,0],E)$ is a strongly ordered Banach space
with positive cone $
C_+([-1,0],E)=\{\varphi\in C([-1,0],E)\mid \varphi(s)\in
E_+\;\text{for} \; s\in [-1,0]\}$
where $E_+=\{z\in E\mid z(x)\geq 0\;\text{for} \;
x\in\bar U\}$ and  $\R^n_+=\{y\in\R^n\mid y_i\geq 0\; \text{for}\;
i=1,\ldots,n\}$. Note that we can trivially identify
\[
\Int
C_+([-1,0],E)=\{\varphi \in C([-1,0]\times\bar U,\R^n)\mid \varphi(s,x)\gg 0
\;\text{for}\; s\in [-1,0]\,,\,x\in\bar U \}\,.
\]
\par
In the case of  Dirichlet boundary conditions, $C([-1,0],E^\alpha)$ is a strongly ordered Banach space with
$
C_+([-1,0],E^\alpha)=\{\varphi\in C([-1,0],E^\alpha)\mid \varphi(s)\in
E^\alpha_+\;\text{for} \; s\in [-1,0]\}\,,
$
where the positive cone in $E^\alpha$ is $E^\alpha_+=\{z\in E^\alpha\,\big|\; z(x)\geq 0\;\text{for} \;
x\in\bar U \}$. 
Besides, $E^\alpha_+$ has a nonempty interior, since
\begin{equation*}
\Big\{z\in E^\alpha_+\,\big|\; z(x)\gg 0\;\text{for} \;
x\in U \;\text{and}\; \frac{\partial z}{\partial n}(x)\ll 0 \;\text{for} \;
x\in\partial U   \Big\}= \Int E^\alpha_+\,,
\end{equation*}
and $\Int C_+([-1,0],E^\alpha)=\{\varphi\in C([-1,0],E^\alpha)\mid \varphi(s)\in \Int E^\alpha_+ \text{ for }s\in[-1,0]\}\not=\emptyset$.
\par
To unify the writing, $E^\gamma$ will stand for the Banach space $E$ in the problem with Neumann or Robin boundary conditions, and for $E^\alpha$ in the problem with Dirichlet boundary conditions; and $C_\gamma$ for the space $C([-1,0],E^\gamma)$ with the sup-norm $\|\,{\cdot}\,\|_{C_\gamma}$. Also, the order relations in $E^\gamma$ and $C_\gamma$ will just be denoted by $\leq$, $<$ and $\ll$ according to~\eqref{order}, but have in mind the different spaces involved in each case.
\begin{prop}\label{prop-monotone}
Assume hypotheses $\rm{(C)}$ and $(C^{\theta}(t))$, for some $0<\theta<1/2$, on the
map $f$ in~\eqref{family}, plus the quasimonotone condition:
\begin{itemize}
\item[(QM)] If $\,y,\wit y, u,\wit u\in \R^n$ with $y\leq u$, $\wit y\leq \wit u$ and $y_i=u_i$ for some $i\in\{1,\ldots,n\}$, then
$f_i(\w,x,y,\wit y)\leq f_i(\w,x,u, \wit u)$ for any $\w\in\W$ and $x\in\bar U$.
\end{itemize}
Besides, in the Dirichlet case assume further:
\begin{itemize}
\item[(DM)] $f(\w,x,0,0)=0$ for any $\w\in\W$ and $x\in\partial U$.
\end{itemize}
Then:
\begin{itemize}
\item[(i)] The induced skew-product semiflow on $\W\times C_\gamma$
is monotone, that is, if $\varphi,\psi\in
C_\gamma$ with $\varphi\leq \psi$, then $z_t(\w,\varphi)\leq
z_t(\w,\psi)$ for any $\w\in\W$ and any $t\geq 0$ where both terms
are defined.
\item[(ii)] Given $\w\in\W$ and $\varphi,\psi\in
C_\gamma$ with $\varphi\leq \psi$ such that $z(t,\w,\varphi)$ and $z(t,\w,\psi)$ are defined for
$t\in[0,\beta]$ for some $\beta>0$, there exists an $L=L(\w,\varphi,\psi,\beta)>0$ such that
for each $i=1,\ldots,n$, and for each $t\in[0,\beta]$,
\[
z_i(t,\w,\psi)-z_i(t,\w,\varphi)\geq e^{-L
t}\,e^{t A_i}\,(\psi_i(0)-\varphi_i(0))\,.
\]
\end{itemize}
\end{prop}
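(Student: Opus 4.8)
The plan is to derive~(i) from the classical fact that the quasimonotone condition makes parabolic (functional) systems order-preserving --- this is the comparison technique of Martin and Smith~\cite{masm0,masm} and Wu~\cite{wu}, which for Neumann conditions lies behind the corresponding statement in Novo et al.~\cite{nonuobsa}; the new cases, Robin and Dirichlet, are absorbed into the semigroup constructions of Section~\ref{sec-skew-product}. Part~(ii) is then a short quantitative sharpening of~(i). In both parts one first restricts to the compact window $[-1,\beta]$: $z(\cdot,\w,\varphi)$ and $z(\cdot,\w,\psi)$ are continuous there with values in $C_\gamma$, so their ranges, hence the pointwise values $z(t,\w,\varphi)(x),\,z(t,\w,\psi)(x)\in\R^n$, lie in a common ball of some radius $\rho$; set $L:=L_\rho$ as provided by hypothesis~(C). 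Writing $w(t):=z(t,\w,\psi)-z(t,\w,\varphi)$, the mild-solution identity reads componentwise
\[
w_i(t)=e^{tA_i}\,w_i(0)+\int_0^t e^{(t-s)A_i}\,g_i(s)\,ds\,,\qquad w_i(0)=\psi_i(0)-\varphi_i(0)\,,
\]
with $g_i(s)(x)=f_i(\w{\cdot}s,x,z(s,\w,\psi)(x),z(s{-}1,\w,\psi)(x))-f_i(\w{\cdot}s,x,z(s,\w,\varphi)(x),z(s{-}1,\w,\varphi)(x))$.

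The role of~(QM) is a pointwise lower bound for $g_i$. Suppose, on some interval, $w(s)\geq0$ and $w(s{-}1)\geq0$ in $E^\gamma$; fix such an $s$ and a point $x\in U$, and write $u=z(s,\w,\psi)(x)\geq y=z(s,\w,\varphi)(x)$ and $\wit u=z(s{-}1,\w,\psi)(x)\geq\wit y=z(s{-}1,\w,\varphi)(x)$ in $\R^n$. Let $\widehat u\in\R^n$ coincide with $u$ in every coordinate except the $i$-th, where $\widehat u_i:=y_i$. Then $y\leq\widehat u$, $\widehat u_i=y_i$ and $\wit y\leq\wit u$, so~(QM) gives $f_i(\w{\cdot}s,x,y,\wit y)\leq f_i(\w{\cdot}s,x,\widehat u,\wit u)$; since $u$ and $\widehat u$ differ only in the $i$-th coordinate with $\|u-\widehat u\|=u_i-y_i=w_i(s)(x)$, hypothesis~(C) yields
\[
g_i(s)(x)\;\geq\;f_i(\w{\cdot}s,x,u,\wit u)-f_i(\w{\cdot}s,x,\widehat u,\wit u)\;\geq\;-L\,w_i(s)(x)\,.
\]
In the Dirichlet case this is trivial at $x\in\partial U$, where every argument of $f$ vanishes ($E^\alpha\hookrightarrow C_0(\bar U,\R^n)$ and the data lie in $E^\alpha$); hypothesis~(DM), which makes the reaction term compatible with the homogeneous Dirichlet condition, is what keeps the solutions --- and the perturbed problems used below --- inside the framework of $E^\alpha$ and its cone, and so is needed for the comparison in~(i). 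In short, whenever $w\geq0$ and $w(\cdot{-}1)\geq0$ on $[0,t]$, one has $g_i(s)+L\,w_i(s)\geq0$ there for each $i$.

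For~(i) I would run the standard perturbation and first-exit-time argument. For $\ep>0$ let $\zeta^\ep$ be the mild solution of the same problem with the extra forcing $\ep\,\mathbf 1$ added to the reaction term and with initial datum $\psi+\ep\,\mathbf 1$ (in the Dirichlet case, $\mathbf 1$ being replaced for the datum by a fixed element of $\Int E^\alpha_+$), so that $\zeta^\ep\to z(\cdot,\w,\psi)$ as $\ep\to0^+$ by continuous dependence; set $W^\ep(t):=\zeta^\ep(t)-z(t,\w,\varphi)$, which is $\gg0$ on $[-1,0]$. If $t_1\in(0,\beta]$ were the first time $W^\ep$ left the interior of the positive cone, then on $[0,t_1)$ we would have $W^\ep\geq0$ and $W^\ep(\cdot{-}1)\gg0$ on $[0,1]$, so the estimate above applies with the additional $+\ep$, and combined with the \emph{strict} positivity of the analytic semigroups $e^{tA_i}$, $t>0$ (the strong maximum principle --- hypothesis~$(C^\theta(t))$ makes the solutions classical for $t\geq1$ via Theorems~\ref{teor-time regularity-Neumann} and~\ref{teor-time regularity-Dirichlet}, and one approximates on $[0,1]$ --- together with the Hopf boundary estimate in the Dirichlet case) it forces $W^\ep(t_1)$ back into the interior, a contradiction; the method of steps then pushes the conclusion from $[0,1]$ to $[1,2]$ and onward, so $W^\ep\gg0$ on $[0,\beta]$. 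Letting $\ep\to0^+$ gives $w(t)\geq0$, i.e.\ $z_t(\w,\varphi)\leq z_t(\w,\psi)$ on $[0,\beta]$.

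Granting~(i), part~(ii) is immediate. Since $A_i-L\,\mathrm{Id}$ generates $e^{t(A_i-L)}=e^{-Lt}e^{tA_i}$, the variation-of-constants formula applied to $w_i'=(A_i-L)w_i+(g_i+L\,w_i)$ gives
\[
w_i(t)=e^{-Lt}e^{tA_i}\,w_i(0)+\int_0^t e^{-L(t-s)}e^{(t-s)A_i}\bigl(g_i(s)+L\,w_i(s)\bigr)\,ds\,.
\]
By~(i), $w(s)\geq0$ for all $s\in[0,\beta]$, and on $[0,\min\{1,\beta\}]$ also $w(s{-}1)\geq0$; hence the integrand is $\geq0$ by the positivity of $e^{rA_i}$, and since $w_i(0)=\psi_i(0)-\varphi_i(0)\geq0$ we get $w_i(t)\geq e^{-Lt}e^{tA_i}(\psi_i(0)-\varphi_i(0))$ for $t\in[0,\min\{1,\beta\}]$. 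If $\beta>1$, restarting at $t=1$ the same computation on $[1,\min\{2,\beta\}]$ yields $w_i(t)\geq e^{-L(t-1)}e^{(t-1)A_i}w_i(1)$, while the first step at $t=1$ gives $w_i(1)\geq e^{-L}e^{A_i}w_i(0)\geq0$; applying the positive operator $e^{-L(t-1)}e^{(t-1)A_i}$ and using $e^{(t-1)A_i}e^{A_i}=e^{tA_i}$ propagates the bound $w_i(t)\geq e^{-Lt}e^{tA_i}w_i(0)$, and iterating over the unit subintervals covers all of $[0,\beta]$, with $L=L_\rho=L(\w,\varphi,\psi,\beta)$. I expect the main obstacle to be part~(i) in the Dirichlet setting: the comparison must be carried out in the intermediate space $E^\alpha$, whose cone has the subtler interior involving an inward-pointing normal derivative on $\partial U$, so that the strict-positivity input is a Hopf-type estimate for the Dirichlet heat semigroup rather than a bare maximum principle, and one has to verify that the forced and perturbed problems stay within this setting --- which is precisely where~(DM) is indispensable.
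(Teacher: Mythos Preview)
Your proof is correct. For part~(i) you and the paper both rely on the Martin--Smith comparison principle; you sketch more of the mechanism (the $\varepsilon$-perturbation and first-exit-time argument, together with the Hopf boundary estimate for the Dirichlet cone in $E^\alpha$), whereas the paper simply cites~\cite{masm0,masm}.

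For part~(ii) the two proofs diverge. The paper first restricts to initial data $\varphi,\psi\in C^\theta([-1,0],E)$ (with $\varphi(0),\psi(0)\in C^{2\theta}$ in the Neumann/Robin case) so that the mild solutions are \emph{classical} by Theorems~\ref{teor-time regularity-Neumann} and~\ref{teor-time regularity-Dirichlet}; it then differentiates $v_i(t)=e^{Lt}(z_i(t,\w,\psi)-z_i(t,\w,\varphi))$, obtains $v_i'(t)\geq A_iv_i(t)$ from the same (QM)+(C) lower bound you derive, integrates against the positive semigroup, and finally passes to general $\varphi,\psi$ by a density/approximation argument. Your route is more direct: you stay at the mild-solution level and use the algebraic identity
\[
w_i(t)=e^{-Lt}e^{tA_i}w_i(0)+\int_0^t e^{-L(t-s)}e^{(t-s)A_i}\bigl(g_i(s)+Lw_i(s)\bigr)\,ds,
\]
which is valid for any continuous $g_i$ and any mild solution $w_i$ (no differentiability needed). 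This buys you the inequality without ever reducing to classical solutions, so the density step disappears. Two minor remarks: first, it would strengthen the write-up to state this shifted variation-of-constants identity explicitly and note that it is purely algebraic (a Fubini computation), since a reader might otherwise suspect you are differentiating a mild solution. Second, your iteration over unit subintervals is unnecessary: once you have~(i), $w(s)\geq 0$ and $w(s-1)\geq 0$ hold simultaneously on \emph{all} of $[0,\beta]$ (for $s\in[0,1]$ the latter is the hypothesis $\varphi\leq\psi$, and for $s\geq 1$ it is~(i) applied at time $s-1$), so the integrand is nonnegative on $[0,\beta]$ in one shot.
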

\begin{proof}
(i) For each fixed $\w\in \W$, it follows from the results in Martin and Smith~\cite{masm0,masm}.
\par
(ii)  We include the proof for the sake of completeness, although the result in the Neumann case follows from Lemma~4.3 in Novo et al.~\cite{nonuobsa}. First of all, observe that if  $\w\in\W$, and $\varphi,\psi\in
C_\gamma$ with $\varphi\leq \psi$ and $\|\varphi(s)(x)\|,\|\psi(s)(x)\|\leq \rho$ for any $s\in [-1,0]$ and $x\in \Bar U$, then, for any $t\in\R$ and $x\in\bar U$,
\begin{multline}\label{lemma 1.1}
f_i(\w{\cdot}t,x,\psi(0)(x),\psi(-1)(x))-f_i(\w{\cdot}t,x,\varphi(0)(x),\varphi(-1)(x))\geq \\ - L\,(\psi_i(0)(x)-\varphi_i(0)(x))\,,
\end{multline}
for the constant $L=L_\rho>0$ provided in $\rm{(C)}$.
To see it, just subtract and add the term $f_i(\w{\cdot}t,x,\phi(0)(x),\varphi(-1)(x))$ for the map $\phi\in C_\gamma$ defined by $\phi_i=\psi_i$ and  $\phi_j=\varphi_j$ if $j\not= i$, which satisfies $\varphi\leq \phi\leq \psi$, and then apply (QM) and (C).
\par
Now, let us fix $\varphi,\psi\in
C_\gamma$ with $\varphi\leq \psi$ and such that $z(t,\w,\varphi)$ and $z(t,\w,\psi)$ are defined for
$t\in[0,\beta]$, and let $\rho>0$ be such that $\sup\{\|z(t,\w,\varphi)(x)\|,\,\|z(t,\w,\psi)(x)\|\mid t\in[-1,\beta],\, x\in \bar U\}< \rho$. Then take $L=L_\rho$ the constant given in (C), which obviously depends on $\w,\,\varphi,\,\psi$ and $\beta$.
\par
As a first step, we consider the particular case when $\varphi,\psi\in C^\theta([-1,0],E)$, and $\varphi(0),\,\psi(0)\in C^{2\theta}(\bar U, \R^n)$ in the Neumann or Robin cases.  Then, either Theorem~\ref{teor-time regularity-Neumann} or Theorem~\ref{teor-time regularity-Dirichlet} applies to get that the mild solutions $z(t,\w,\varphi)$ and $z(t,\w,\psi)$ are classical solutions on $[0,\beta]$, so that for each fixed $i=1,\ldots,n$ we can consider the map on $[0,\beta]$, with values in $D(A_i)$ for $t>0$, defined by
$v_i(t)=e^{L t}\,(z_i(t,\w,\psi)-z_i(t,\w,\varphi))$.
Then, for $t>0$, and for $F:\W\times C_\gamma\to E$ defined in~\eqref{F}, we have that
\begin{equation*}
v_i'(t)= L v_i(t)+ A_i
v_i(t)+e^{L t} (F_i(\w{\cdot}t,z_t(\w,\psi))-
F_i(\w{\cdot}t,z_t(\w,\varphi))).
\end{equation*}
Now, for any $t\in(0,b]$, since $z_t(\w,\varphi)\leq z_t(\w,\psi)$  by (i), and by the choice of $\rho$,~\eqref{lemma 1.1} applies and we can write $v_i'(t)\geq L v_i(t)+A_i v_i(t)-L v_i(t)=
A_i v_i(t)$,
so that $g_i(t)=v_i'(t)-A_i v_i(t)\geq 0$. Now,  $(e^{tA_i})_{t\geq 0}$ is
a positive semigroup of operators in $C(\bar U)$ in the Neumann or Robin cases, and in $C_0(\bar U)$ in the Dirichlet case  (for instance, see Smith \cite{smit}).  Besides, in the Dirichlet case, $\overline{ D(A_i)}=C_0(\bar U)$ and {\rm (DM)} is assumed, so that $g_i(t)=L v_i(t)+e^{L t} (F_i(\w{\cdot}t,z_t(\w,\psi))-
F_i(\w{\cdot}t,z_t(\w,\varphi)))\in C_0(\bar U)$ for $t>0$.
Finally, since $v_i'(t)=A_i v_i(t)+g_i(t)$, we can write
\begin{equation*}
v_i(t)=e^{tA_i}\,v_i(0)+\int_0^t e^{(t-s)A_i}\,g_i(s)\,ds
\geq e^{tA_i}\,v_i(0) = e^{tA_i}(\psi_i(0)-\varphi_i(0)) \,,
\end{equation*}
from where the searched inequality immediately follows.
\par
In the general case, note that  the set of H\"{o}lder continuous maps $C^\theta([-1,0],E^\gamma)$, with $\varphi(0)\in C^{2\theta}(\bar U, \R^n)$ in the Neumann or Robin cases, is dense in $C_\gamma$, and in the Dirichlet case $C^\theta([-1,0],E^\alpha)\subset C^\theta([-1,0],E)$. Then, for $\varphi,\psi\in C_\gamma$ as before, we can take sequences $\{\varphi_n\},\{\psi_n\}$ as in the first step
with $\varphi_n\to\varphi$ and  $\psi_n\to\psi$,
$\varphi_n\leq \varphi\leq \psi\leq \psi_n$ for any $n\geq 1$ and
such that
$\|z(t,\w,\varphi_n)(x)\|,\,\|z(t,\w,\psi_n)(x)\|\leq
\rho$ for any $t\in[0,\beta]$, $x\in \bar U$ and $n\geq 1$. Then, the proof is finished  by applying the
first step to the pairs $\varphi_n,\psi_n$ and taking limits as
$n\to\infty$.
\end{proof}
 Note that the standard parabolic maximum principle implies that $e^{t A_i}$ is strong\-ly positive for $t>0$, i.e., if $z_i>0$, then $e^{t A_i}z_i\gg 0$. Then, in the situation of the previous result, if $\varphi_i(0)<\psi_i(0)$ for some $i$, it is $z_i(t,\w,\varphi)\ll z_i(t,\w,\psi)$ for $t>0$.
\section{The linearized semiflow and Lyapunov exponents}\label{sec-linearized sem} \noindent
In this section we build the linearized semiflow under regularity conditions in the problems. Besides, when the semiflow is also monotone, we present the concept of a continuous separation of type~II and of the related principal spectrum, and show how the latter can be calculated in terms of some Lyapunov exponents.
\par
From now on, we use the unified notation introduced in Section~\ref{sec-monotonos} to include any of the boundary conditions, but whenever it is convenient to make a distinction, we will write $C=C([-1,0],E)$ with sup-norm $\|\,{\cdot}\,\|_{C}$ in the Neumann and Robin cases, and  $C_\alpha=C([-1,0],E^\alpha)$  with sup-norm $\|\,{\cdot}\,\|_{C_\alpha}$ in the Dirichlet case.
\par
In the case of Neumann boundary conditions, the next result can be found in Novo et al.~\cite{nonuobsa}, and it can be  trivially extended to the case of Robin boundary conditions. The proof is inspired in the proof of Theorem~3.5 in Novo et al.~\cite{nonuobsa}.
\begin{teor}\label{teor-linearized sk}
Consider the family of IBV problems with delay~\eqref{family}, $\w\in\W$ and assume that $f:\W\times\bar U\times \R^n\times \R^n\to\R^n$ is continuous and  of class $C^1$ in the $y$ and $\wit y$ variables. Then, the skew-product semiflow generated by mild solutions on $\W\times C_\gamma$, $\tau(t,\w,\varphi)=(\w{\cdot}t,z_t(\w,\varphi))$  is  of class $C^1$ with respect to $\varphi$. Furthermore, for each  $\psi\in C_\gamma$, $D_{\!\varphi} z_t(\w,\varphi)\,\psi =
v_t(\w,\varphi,\psi)$ for the mild solution $v(t,\w,\varphi,\psi)$
of the associated variational retarded ACP along the semiorbit of
$(\w,\varphi)$,
\begin{equation}\label{variational}
\left\{\begin{array}{l} v'(t)=Av(t)+D_{\!\varphi}
F(\w{\cdot}t,z_t(\w,\varphi))\,v_t\,, \quad t> 0\,,
\\v_0=\psi\in C_\gamma\,,\end{array}\right.
\end{equation}
which is defined for $t$ in $[0,b)$, the maximal interval of definition of $z(t,\w,\varphi)$.
\end{teor}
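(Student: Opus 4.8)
The plan is to follow the standard scheme for the $C^1$-dependence of mild solutions of semilinear retarded ACPs on the initial datum, adapting the proof of Theorem~3.5 in~\cite{nonuobsa} so that the Robin and Dirichlet cases are handled at once in the unified space $C_\gamma$. Observe first that joint continuity of $f,\,D_y f,\,D_{\wit y}f$ together with compactness of $\W\times\bar U$ forces $f$ to be Lipschitz in $(y,\wit y)$ on bounded sets uniformly in $(\w,x)$, so condition~(C) holds and $\tau$ is the locally defined semiflow already constructed in Section~\ref{sec-skew-product}. The first and main step is to prove that the substitution map $F$ of~\eqref{F} is of class $C^1$ from $\W\times C_\gamma$ into $E$, with
\begin{multline*}
\big(D_{\!\varphi}F(\w,\varphi)\,\psi\big)(x)=D_y f(\w,x,\varphi(0,x),\varphi(-1,x))\,\psi(0,x)\\
{}+D_{\wit y}f(\w,x,\varphi(0,x),\varphi(-1,x))\,\psi(-1,x)\,,\qquad x\in\bar U\,,
\end{multline*}
and with $(\w,\varphi)\mapsto D_{\!\varphi}F(\w,\varphi)\in\mathcal L(C_\gamma,E)$ continuous on a neighbourhood of any compact set. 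Since all the spaces carry sup-norms and the evaluations $\varphi\mapsto\varphi(0,\cdot),\,\varphi(-1,\cdot)$ are bounded from $C_\gamma$ into $E$ (in the Dirichlet case through $E^\alpha\hookrightarrow C^1(\bar U,\R^n)\hookrightarrow E$), this reduces to the $C^1$-character of the superposition operator $(v,\wit v)\mapsto[\,x\mapsto f(\w,x,v(x),\wit v(x))\,]$ from $E\times E$ into $E$; the Taylor remainder estimate $\n{F(\w,\varphi+\psi)-F(\w,\varphi)-D_{\!\varphi}F(\w,\varphi)\,\psi}=o(\n{\psi}_{C_\gamma})$ and the continuity of $D_{\!\varphi}F$ then follow from the uniform continuity of $D_y f,\,D_{\wit y}f$ on compact subsets of $\W\times\bar U\times\R^n\times\R^n$. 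I expect this to be the main obstacle, since it is where infinite-dimensionality genuinely enters; the rest is Gronwall machinery.

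\emph{Well-posedness of the variational equation.} By the previous step, for fixed $(\w,\varphi)$ the coefficient map $t\mapsto D_{\!\varphi}F(\w{\cdot}t,z_t(\w,\varphi))\in\mathcal L(C_\gamma,E)$ is continuous, hence bounded on compact subintervals of $[0,b)$, so~\eqref{variational} is a linear retarded ACP with a bounded continuous driving term. A contraction argument on successive short intervals, via the variation-of-constants formula and --- in the Dirichlet case --- the generalized Gronwall inequality (Lemma~7.1.1 in~\cite{henr}) to absorb the integrable singularity of $\n{e^{(t-s)A}}_{\mathcal L(E,E^\alpha)}\le M_\alpha(t-s)^{-\alpha}$, yields a unique mild solution $v(t,\w,\varphi,\psi)$; being linear in $\psi$ and governed by coefficients available on all of $[0,b)$, it is defined on $[0,b)$ and $\n{v_t(\w,\varphi,\psi)}_{C_\gamma}\le K(T)\,\n{\psi}_{C_\gamma}$ for $t\le T<b$.

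\emph{Differentiability of $z_t$.} Fix $T<b$ and $\psi$ with $\n{\psi}_{C_\gamma}\le1$. By continuous dependence on the datum, for $|h|$ small the solution $z(\cdot,\w,\varphi+h\psi)$ is defined on $[0,T]$ and stays in a fixed bounded set, and a Gronwall estimate using the local Lipschitzness of $F$ shows that the quotients $\delta^h_t:=h^{-1}\big(z_t(\w,\varphi+h\psi)-z_t(\w,\varphi)\big)$ remain bounded in $C_\gamma$, uniformly in $|h|$ small and $t\le T$. Subtracting the integral equations satisfied by $z(\cdot,\w,\varphi+h\psi)$, $z(\cdot,\w,\varphi)$ and $v(\cdot,\w,\varphi,\psi)$ and inserting the identity
\[
h^{-1}\big(F(\w{\cdot}s,z_s(\w,\varphi+h\psi))-F(\w{\cdot}s,z_s(\w,\varphi))\big)=\int_0^1 D_{\!\varphi}F\big(\w{\cdot}s,(1-r)z_s(\w,\varphi)+r\,z_s(\w,\varphi+h\psi)\big)\,\delta^h_s\,dr\,,
\]
one gets, for $w^h_t:=\delta^h_t-v_t(\w,\varphi,\psi)$ (which vanishes at $t=0$ since $v_0=\psi$), an inequality $\n{w^h(t)}\le C\int_0^t\big(\n{R^h(s)}+\n{w^h_s}_{C_\gamma}\big)\,ds$ (with the $(t-s)^{-\alpha}$ weight in the Dirichlet case), where $\n{R^h(s)}\to0$ as $h\to0$, uniformly in $s\in[0,T]$, by the uniform continuity of $D_{\!\varphi}F$ near the compact orbit segment $\{(\w{\cdot}s,z_s(\w,\varphi)):s\in[0,T]\}$ and the boundedness of $\delta^h_s$. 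The (generalized) Gronwall lemma then forces $\sup_{t\le T}\n{w^h_t}_{C_\gamma}\to0$, uniformly for $\n{\psi}_{C_\gamma}\le1$; hence $D_{\!\varphi}z_t(\w,\varphi)$ exists for $t\in[0,b)$ and equals $v_t(\w,\varphi,\cdot)$.

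\emph{Conclusion.} It remains to check that $\tau$ is of class $C^1$ in the sense of Section~\ref{sec-prelim}. Continuity of $(\w,\varphi)\mapsto D_{\!\varphi}z_t(\w,\varphi)=v_t(\w,\varphi,\cdot)\in\mathcal L(C_\gamma)$ on a neighbourhood of a compact set follows from continuous dependence of the mild solution of the linear equation~\eqref{variational} on $(\w,\varphi)$ --- once more a Gronwall argument, built on the continuity of the coefficient $(\w,\varphi)\mapsto D_{\!\varphi}F(\w{\cdot}t,z_t(\w,\varphi))$ obtained above --- this being uniform for $\n{\psi}_{C_\gamma}\le1$, hence valid in operator norm. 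Finally, since $v_0(\w,\varphi,\phi)=\phi$ and $t\mapsto v_t(\w,\varphi,\phi)$ is continuous into $C_\gamma$ jointly in its parameters, $\lim_{t\to0^+}D_{\!\varphi}z_t(\w,\varphi)\,\phi=\phi$ uniformly for $(\w,\varphi,\phi)$ in compact sets, and the proof is complete.
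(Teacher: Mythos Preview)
Your proposal is correct and follows essentially the same approach as the paper: establish the $C^1$-character of the substitution map $F$, solve the variational equation, prove differentiability via the mean-value integral representation of $F(\w{\cdot}s,z_s(\w,\varphi+h\psi))-F(\w{\cdot}s,z_s(\w,\varphi))$ combined with the generalized Gronwall lemma of Henry (absorbing the $(t-s)^{-\alpha}$ singularity from $\|e^{(t-s)A}\|_{\mathcal L(E,E^\alpha)}$ in the Dirichlet case), and finally check continuity of $(\w,\varphi)\mapsto D_{\!\varphi}z_t(\w,\varphi)$ in operator norm by a further Gronwall argument. The only organisational differences are that the paper writes out the Dirichlet case alone (the Neumann/Robin case being already in~\cite{nonuobsa}) rather than treating both cases simultaneously in $C_\gamma$, that it does not isolate the $C^1$-smoothness of $F$ as a preliminary lemma but uses the relevant estimates directly in the main computation, and that it handles the history segment by working with the monotone auxiliary function $h^\varepsilon(s)=\frac{1}{\varepsilon}\sup_{r\in[0,s]}\|g^\varepsilon(r)\|_\alpha$ rather than bounding $\|w^h_t\|_{C_\gamma}$ directly; these are cosmetic.
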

\begin{proof}
We write the proof for the case of Dirichlet boundary conditions. Recall that $F:\W\times C_\alpha\to E$ is defined in~\eqref{F} and that $z(t,\w,\varphi)$ is a mild solution of the retarded ACP~\eqref{ACPdelay}. In this case with fixed delay,
\begin{multline}\label{derivada}
[D_{\!\varphi}
F(\w{\cdot}t,z_t(\w,\varphi))\,v_t](x)=D_{y} f(\w{\cdot}t,x,z(t,\w,\varphi)(x),z(t-1,\w,\varphi)(x))\,v(t)(x)\\
+D_{\wit y} f(\w{\cdot}t,x,z(t,\w,\varphi)(x),z(t-1,\w,\varphi)(x))\,v(t-1)(x)\,,\quad x\in \bar U.
\end{multline}
\par
By the $C^1$ character of $f(\w,x,y,\wit y)$ in $(y,\wit y)$, we can argue as in the previous sections to get the existence of a unique mild solution of~\eqref{variational}, denoted by $v(t)=v(t,\w,\varphi,\psi)$. By linearity of the problem, $v$ exists in the large, i.e.,
\begin{equation}\label{mild v}
v(t)=e^{tA}\,\psi(0) +\int_0^t e^{(t-s)A}\,D_{\!\varphi}
F(\w{\cdot}s,z_s(\w,\varphi))\,v_s
\,ds\,,\quad\hbox{for any }\, t\in [0,b)\,.
\end{equation}
\par
Let us fix a $t>0$, and let us
first check that for $\w\in \W$ and $\varphi,\,\psi\in
C_\alpha$, $D_{\!\varphi} z_t(\w,\varphi)\,\psi$ exists, provided that $z_t(\w,\varphi)$ exists, and $D_{\!\varphi} z_t(\w,\varphi)\,\psi =
v_t(\w,\varphi,\psi)$, and second that the map $\Om\times
C_\alpha\to \mathcal L(C_\alpha)$, $(\w,\varphi) \mapsto D_{\!\varphi}
z_t(\w,\varphi)$ is continuous.
\par
First of all, note that fixed $t>0$ and $(\w,\varphi)\in \W\times C_\alpha$ such that $z_t(\w,\varphi)$ exists, and given $\psi\in C_\alpha$, the
solution $z(\,\cdot\,,\w,\varphi+\varepsilon \,\psi)$
of~\eqref{ACPdelay} with initial data $z_0=\varphi+\varepsilon
\,\psi$ is also defined on $[0,t]$, provided that $|\varepsilon|\leq
\varepsilon_0$ for a sufficiently small $\varepsilon_0$. We want to
prove that there exists the limit
\[
\lim_{\varepsilon\to 0} \frac{z_t(\w,\varphi+\varepsilon
\,\psi)-z_t(\w,\varphi)}{\varepsilon}= v_t(\w,\varphi,\psi)\,.
\]
For convenience, we will get to this by  proving that
$\lim_{\varepsilon\to 0} h^\varepsilon(t)=0$ for the map
\[
h^\varepsilon(s)= \frac{1}{\varepsilon}
 \sup_{r\in [0,s]} \| z(r,\w,\varphi+\varepsilon
\,\psi)-z(r,\w,\varphi)-\varepsilon\, v(r,\w,\varphi,\psi)\|_\alpha\,,\quad s\in [0,t]\,.
\]
Let us call $g^\varepsilon(r)=z(r,\w,\varphi+\varepsilon
\,\psi)-z(r,\w,\varphi)-\varepsilon\, v(r,\w,\varphi,\psi)$ for $r\in[0,s]$ and recall that $\|z\|_\alpha= \|(-A_{p})^{\alpha}\,z\|_p$, for any $z\in E^\alpha$.
\par
Having in mind~\eqref{variation constants} and~\eqref{mild v} we
write for $r\in [0,s]$,
\begin{multline*}
g^\varepsilon(r)=\displaystyle\int_0^{r} e^{(r-l)A}\,\big[
F(\w{\cdot}l,z_l(\w,\varphi+\varepsilon \,\psi))-
F(\w{\cdot}l,z_l(\w,\varphi))\\ -\varepsilon D_{\!\varphi}
F(\w{\cdot}l,z_l(\w,\varphi))\,v_l(\w,\varphi,\psi)\big]\,dl\,,
\end{multline*}
and the term $F(\w{\cdot}l,z_l(\w,\varphi+\varepsilon \,\psi))-
F(\w{\cdot}l,z_l(\w,\varphi))$ can  be written, applying the mean value
theorem to $F$, as
\[
\int_0^1 D_{\!\varphi}
F(\w{\cdot}l,\lambda\,z_l(\w,\varphi+\varepsilon \,\psi)+
(1-\lambda)\,z_l(\w,\varphi))(z_l(\w,\varphi+\varepsilon
\,\psi)-z_l(\w,\varphi))\,d\lambda\,.
\]
Consequently, for any $r\in [0,s]$ we can write
\begin{multline*}
g^\varepsilon(r)= \displaystyle\int_0^{r} e^{(r-l)A}\left(\int_0^1
D_{\!\varphi}  F(\w{\cdot}l,\lambda\,z_l(\w,\varphi+\varepsilon
\,\psi)+ (1-\lambda)\,z_l(\w,\varphi))\,g_l^\varepsilon\,d\lambda
\right)dl
\\ + \varepsilon \displaystyle\int_0^{r} e^{(r-l)A}\left( \int_0^1 \big[
D_{\!\varphi} F(\w{\cdot}l,\lambda\,z_l(\w,\varphi+\varepsilon
\,\psi)+ (1-\lambda)\,z_l(\w,\varphi)) \right.\\ -
D_{\!\varphi}
F(\w{\cdot}l,z_l(\w,\varphi))\big]\,v_l(\w,\varphi,\psi)\,d\lambda
\Big)\,dl \,.
\end{multline*}
From this, taking into account that:
\par\smallskip
- there exists an $M_\alpha'>0$ such that
\begin{equation}\label{cota}
\|(-A_{p})^\alpha\,e^{rA}\,z\|_p\leq M_\alpha'\,r^{-\alpha}\,\|z\| \quad\text{for any}\; z\in E\;\text{and}\; r>0,
\end{equation}
because of~\eqref{estimate} and the continuous embedding $E\hookrightarrow L^p(U,\R^n)$;

-  $\|v_l(\w,\varphi,\psi)\|_{C_\alpha}\leq K_1$ for some $K_1>0$ and for any $l\in[0,t]\,$;

- $\sup_{\lambda\in[0,1]} \| D_{\!\varphi}  F(\w{\cdot}l,\lambda\,z_l(\w,\varphi+\varepsilon \,\psi)+
(1-\lambda)\,z_l(\w,\varphi))\|_{\mathcal L(C_\alpha,E)}\leq K_2$ for some $K_2>0$, for
any $l\in[0,t]$ and for small enough $|\varepsilon|$, because of the
continuity of $D_{\!\varphi} F$ in $\Om\times C_\alpha$  and the
compactness of $\{z_l(\w,\varphi) \mid l\in[0,t]\}$ for the norm in
$C_\alpha$;

- $\lim_{\varepsilon\to 0}\alpha^\varepsilon(s)=0$ uniformly for $s\in[0,t]$, where  \par
\noindent${\displaystyle \alpha^\varepsilon(s)=M_\alpha'\,K_1\!\!\!\sup_{r\in[0,s]}\!\int_0^r
(r-l)^{-\alpha}\left(\int_0^1 \!\!\| D_{\!\varphi}
F(\w{\cdot}l,\lambda\,z_l(\w,\varphi+\varepsilon \,\psi)+
(1-\lambda)\,z_l(\w,\varphi)) \right.}$\\
${\displaystyle ~\hspace{5.5cm}- D_{\!\varphi} F(\w{\cdot}l,z_l(\w,\varphi))\|_{\mathcal L(C_\alpha,E)}\,d\lambda\Big)dl\,;}$

- the map $h^\varepsilon(l)$ is nondecreasing for $l\in [0,t]$, hence, for $s\in[0,t]$,
\[
\sup_{r\in[0,s]}\int_0^r M_\alpha'\,(r-l)^{-\alpha}
K_2 \, h^\varepsilon(l)\,dl=\int_0^s M_\alpha'\,(s-l)^{-\alpha}
K_2 \, h^\varepsilon(l)\,dl\,;
\]
\par\smallskip\noindent we obtain that for any $s\in [0,t]$,
\[
h^\varepsilon(s)=\frac{1}{\varepsilon}\,\sup_{r\in[0,s]}
\|g^\varepsilon(r)\|_\alpha\leq \alpha^\varepsilon(s)+ \int_0^s M_\alpha'\,(s-l)^{-\alpha}
K_2 \, h^\varepsilon(l)\,dl\,,
\]
and applying the generalized Gronwall's lemma, we get  that for any $s\in [0,t]$,
\[
h^\varepsilon(s)\leq \alpha^\varepsilon(s)+ \theta \int_0^s H(\theta(s-l))\,\alpha^\varepsilon(l)\,dl\,,
\]
where the constant $\theta$ depends on the constants $M_\alpha'\,K_2$ and on  $1-\alpha$, and the map $H(s)$ behaves like  $\frac{s^{-\alpha}}{\Gamma(1-\alpha)}$ as $s\to 0^+$  (see Lemma 7.1.1 in Henry~\cite{henr} for more details). From here, we can deduce that $\lim_{\varepsilon\to 0}h^\varepsilon(t)=0$, as we wanted to see.
\par
To finish the proof, let us fix a $t>0$ and let us check the continuity of the
map $\Om\times C_\alpha\to  \mathcal L(C_\alpha)$, $(\w,\varphi) \mapsto
D_{\!\varphi} z_t(\w,\varphi)$. So, let us take
$\{(\w_n,\varphi_n)\}_{n\geq 1}\subset \Om\times C_\alpha$ with
$(\w_n,\varphi_n)\to (\w,\varphi)$ and let us see that
\begin{align*}
\|D_{\!\varphi} &z_t(\w_n,\varphi_n)-D_{\!\varphi}
z_t(\w,\varphi)\|_{\mathcal L(C_\alpha)}=\sup_{\|\psi\|\leq
1}\|v_t(\w_n,\varphi_n,\psi)- v_t(\w,\varphi,\psi)\|_{C_\alpha}\\
&\leq \sup_{\|\psi\|\leq 1}\sup_{s\in
[0,t]}\|v(s,\w_n,\varphi_n,\psi)- v(s,\w,\varphi,\psi)\|_\alpha\to 0\quad\hbox{as}\; n\to\infty\,.
\end{align*}
The general arguments are similar to the ones used before. Using~\eqref{mild v},
we first apply  the generalized Gronwall's
inequality to prove that $\|v_s(\w,\varphi,\psi)\|_{C_\alpha}$ is
uniformly bounded for $s\in[0,t]$ and $\|\psi\|\leq
1$. Note that~\eqref{sup finito} is needed at this point. Then, again using~\eqref{mild v}  for
$v(s,\w_n,\varphi_n,\psi)- v(s,\w,\varphi,\psi)$, a further
application of the generalized Gronwall's inequality, together with~\eqref{cota} and the facts that:
\par\smallskip
- $\des\sup_{s\in[0,t]}\des\sup_{n\geq 1}\|D_{\!\varphi}
F(\w_n{\cdot}s,z_s(\w_n,\varphi_n))\|_{\mathcal L(C_\alpha,E)}<\infty\,;$

-$\des\lim_{n\to\infty}\des\sup_{s\in[0,t]}\|D_{\!\varphi}
F(\w_n{\cdot}s,z_s(\w_n,\varphi_n))-D_{\!\varphi}
F(\w{\cdot}s,z_s(\w,\varphi))\|_{\mathcal L(C_\alpha,E)}= 0\,;$

-  $l\in [0,t]\mapsto \des\sup_{r\in[0,l]}\des\sup_{\|\psi\|\leq
1}\|v(r,\w_n,\varphi_n,\psi)- v(r,\w,\varphi,\psi)\|_\alpha$ is nondecreasing;
\par\smallskip\noindent
permits to see that the above limit is $0$. The proof is finished.
\end{proof}
In the conditions of the previous result, if there is a compact positively invariant set $K\subset \W\times C_\gamma$ for $\tau$ (e.g., if there is a bounded solution $z(t,\w,\varphi)$ and $K$ is the omega-limit set of $(\w,\varphi)$), one can build the linearized skew-product semiflow over $K$:
\begin{equation*}
 \begin{array}{cccl}
 L: & \R_+\times K \times C_\gamma& \longrightarrow & K \times C_\gamma\\
& (t,(\w,\varphi),\psi) & \mapsto &(\tau(t,\w,x),v_t(\w,\varphi,\psi))\,,
\end{array}
\end{equation*}
with $v_t(\w,\varphi,\psi)=D_{\!\varphi} z_t(\w,\varphi)\,\psi$, and  $v(t,\w,\varphi,\psi)$ is the mild solution of the variational retarded ACP~\eqref{variational} along the semiorbit of
$(\w,\varphi)$. Note that, because of boundedness of $K$, the semiflow inside  $K$ is globally defined.
\par
It is important to note that, if $K$ is $\tau$-invariant and compact, in the Dirichlet case $K$ can be equally considered with either the topology of $\W\times C_\alpha$ or of $\W\times C$.
\begin{prop}
If $K$ is a compact $\tau$-invariant subset of $\Om\times C$, then $K \subset \Omega \times C_\alpha$ and the restriction  of both topologies on $K$ agree.
\end{prop}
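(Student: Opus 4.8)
The plan is to exploit the smoothing property of the analytic semigroup already established in the excerpt. The key fact is that for $t>1$ the section semiflow $\tau_t$ maps $\W\times C$ into $\W\times C_\alpha$ and, more precisely, for $t>1$ one has $z_t(\w,\varphi)\in C([-1,0],E^\alpha)$ whenever $\varphi\in C([-1,0],E)$ (with $\varphi(0)\in C_0(\bar U,\R^n)$), as noted after~\eqref{variation constants} via Proposition~4.2.1 in Lunardi~\cite{luna}. So first I would observe that, since $K$ is $\tau$-invariant, every point of $K$ is of the form $\tau_t(\w,\varphi)=(\w{\cdot}t,z_t(\w,\varphi))$ for some $(\w,\varphi)\in K$ and, say, $t=2>1$; hence its $C$-component lies in $C_\alpha$. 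This gives the set-theoretic inclusion $K\subset\Om\times C_\alpha$.

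Next I would address the statement about topologies. The inclusion map $(\Om\times C_\alpha)\hookrightarrow(\Om\times C)$ is continuous because $C_\alpha\hookrightarrow C$ continuously (which follows from $E^\alpha\hookrightarrow E$, itself a consequence of the chain of embeddings $E_i^\alpha\hookrightarrow C^1(\bar U)\hookrightarrow C(\bar U)$ recorded in Section~\ref{sect-Dirichlet}). Therefore the topology that $\Om\times C$ induces on $K$ is coarser than the one induced by $\Om\times C_\alpha$. For the reverse comparison I would use the standard topological fact that a continuous bijection from a compact space to a Hausdorff space is a homeomorphism: restrict the (continuous, injective) inclusion to $K$ and note that $K$ is compact for the $\Om\times C_\alpha$-topology while $\Om\times C$ is Hausdorff.

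The one point that needs genuine care — the main obstacle — is justifying that $K$ is compact \emph{in} the $\Om\times C_\alpha$ topology, since the hypothesis only says $K$ is compact in $\Om\times C$. Here I would argue as follows: writing $K=\tau_2(K)$ and using that, by the remark following~\eqref{variation constants} (or Proposition~2.4 in Travis and Webb~\cite{trwe} adapted to the Dirichlet case, as stated in Section~\ref{sect-Dirichlet}), $\tau_2:\W\times C\to\W\times C_\alpha$ is compact on its domain, the image $\tau_2(K)$ of the $C$-bounded set $K$ is relatively compact in $\Om\times C_\alpha$. Its closure in $\Om\times C_\alpha$ maps, under the continuous inclusion into $\Om\times C$, onto a compact hence closed set containing $K=\tau_2(K)$; since $K$ is already closed in $\Om\times C$ and the inclusion is injective, this closure coincides with $K$. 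Thus $K$ is compact in $\Om\times C_\alpha$, and the homeomorphism argument of the previous paragraph applies, showing the two subspace topologies on $K$ coincide.
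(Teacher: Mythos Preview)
Your argument is correct and follows essentially the same route as the paper: use $\tau$-invariance together with the compactness of $\tau_t:\W\times C\to\W\times C_\alpha$ for $t>1$ to see that $K$ is relatively compact in $\W\times C_\alpha$, then use the continuity of the inclusion $\W\times C_\alpha\hookrightarrow\W\times C$ to conclude that $K$ is closed (hence compact) in $\W\times C_\alpha$, and finish with the compact-to-Hausdorff homeomorphism fact. One small remark: your phrasing of the closedness step is slightly roundabout---rather than tracking the image of $\bar K^{\alpha}$ under the inclusion, it is cleaner (and this is what the paper does) to note directly that $K$, being closed in $\W\times C$, is the preimage of a closed set under the continuous inclusion and is therefore closed in $\W\times C_\alpha$.
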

\begin{proof}
Since $\tau_t(K)=K$ for any $t\geq 0$ and $\tau_t:\W\times C\to \W\times C_\alpha$ is compact for $t>1$, $K$ is relatively compact in $\W\times C_\alpha$; and it is closed because the inclusion $\Om\times C_\alpha\hookrightarrow \Om\times C$ is continuous. Thus, the identity map with the two topologies
$i:(K,\W\times C_\alpha)\to (K,\W\times C)$
is a homeomorphism, as it is continuous, bijective and $(K,\W\times C_\alpha)$ is compact.
\end{proof}
Closely related to the classical concept of a continuous separation in the terms given by Pol\'{a}\v{c}ik and
Tere\v{s}\v{c}\'{a}k~\cite{pote} and Shen and Yi~\cite{shyi}, Novo et al.~\cite{noos6} introduced the concept of a continuous separation of type~II, which is the appropriate one if there is delay in the equations. We include the definition here, since it is going to be crucial in the study of persistence properties in Section~\ref{sec-uniform persistence}.
\par
When the skew-product semiflow $\tau$ is monotone and of class $C^1$ in $\varphi$, we say that a compact, positively invariant set $K\subset \W\times C_\gamma$ admits a {\em continuous separation of type~\/}II if there are families of
subspaces $\{X_1(\w,\varphi)\}_{(\w,\varphi)\in K}$ and
$\{X_2(\w,\varphi)\}_{(\w,\varphi)\in K}$ of $C_\gamma$ satisfying the following
properties.
\begin{itemize}
\item[(S1)$\;$] $C_\gamma=X_1(\w,\varphi)\oplus X_2(\w,\varphi)$  and $X_1(\w,\varphi)$,
$X_2(\w,\varphi)$ vary
    continuously in $K$;
 \item[(S2)$\;$] $X_1(\w,\varphi)=\spa\{ \psi(\w,\varphi)\}$, with $\psi(\w,\varphi)\gg 0$ and
     $\|\psi(\w,\varphi)\|_{C_\gamma}=1$ for any $(\w,\varphi)\in K$;
\item[(S3)'] there exists a $t_0>0$ such that if for some $(\w,\varphi)\in
K$ there is a
    $\phi\in X_2(\w,\varphi)$ with $\phi>0$, then $D_{\!\varphi} z_t(\w,\varphi)\,\phi=0$ for any $t\geq t_0$;
\item[(S4)$\;$] for any $t>0$,  $(\w,\varphi)\in K$,
\begin{align*}
D_{\!\varphi} z_t(\w,\varphi)\,X_1(\w,\varphi)&= X_1(\tau(t,\w,\varphi))\,,\\
D_{\!\varphi} z_t(\w,\varphi)\,X_2(\w,\varphi)&\subset X_2(\tau(t,\w,\varphi))\,;
\end{align*}
\item[(S5)$\;$] there are $M>0$, $\delta>0$ such that for any
$(\w,\varphi)\in K$, $\phi\in
    X_2(\w,\varphi)$ with $\|\phi\|_{C_\gamma}=1$ and $t>0$,
\begin{equation*}
\|D_{\!\varphi} z_t(\w,\varphi)\,\phi\|_{C_\gamma}\leq M \,e^{-\delta
t}\,\|D_{\!\varphi} z_t(\w,\varphi)\,\psi(\w,\varphi)\|_{C_\gamma}\,.
\end{equation*}
\end{itemize}
The precise meaning of  the continuous variation expressed in  (S1) has been explained in Obaya and Sanz~\cite{obsa}.
\par
For convenience, we also recall some definitions of Lyapunov exponents. The standard definition of  superior and inferior Lyapunov exponents at $\infty$ of  each $(\w,\varphi,\psi)\in K\times C_\gamma$ is as follows (for instance, see Sacker and Sell~\cite{sase}):
\[
\lambda_i(\w,\varphi,\psi)=\liminf_{t\to\infty} \frac{\log\|v_t(\w,\varphi,\psi)\|_{C_\gamma}}{t}\,,\;
\lambda_s(\w,\varphi,\psi)=\limsup_{t\to\infty} \frac{\log\|v_t(\w,\varphi,\psi)\|_{C_\gamma}}{t}\,;
\]
the Lyapunov exponents of each $(\w,\varphi)\in K$ are defined by
\[
\lambda_i(\w,\varphi)=\liminf_{t\to\infty} \frac{\log\|D_{\!\varphi} z_t(\w,\varphi)\|_{\mathcal{L}(C_\gamma)}}{t},\,
\lambda_s(\w,\varphi)=\limsup_{t\to\infty} \frac{\log\|D_{\!\varphi} z_t(\w,\varphi)\|_{\mathcal{L}(C_\gamma)}}{t};
\]
and the lower and upper Lyapunov exponents of $K$ are respectively the numbers:
$\alpha_K=\inf_{(\w,\varphi)\in K} \lambda_i(\w,\varphi)$ and $\lambda_K=\sup_{(\w,\varphi)\in K} \lambda_s(\w,\varphi)$.
\par
When the linearized semiflow $L$ is monotone and $K$ is a minimal set with a flow extension and a continuous separation of type~II, these exponents play a fundamental role in the determination of the principal spectrum $\Sigma_p$ (see Mierczy{\'n}ski and Shen~\cite{mish}), that is, the Sacker-Sell spectrum (see~\cite{sase,sase94}) of the restriction of $L$ to the one-dimensional invariant subbundle
\begin{equation*}
\displaystyle\bigcup_{(\w,\varphi)\in K} \{(\w,\varphi)\} \times X_1(\w,\varphi)\,.
\end{equation*}
More precisely, $\Sigma_p=[\alpha_K,\lambda_K]$  and besides, if $X_1(\w,\varphi)=\spa\{\psi\}$ for the vector $\psi=\psi(\w,\varphi)\gg 0$ in (S2), then
$\lambda_i(\w,\varphi)=\lambda_i(\w,\varphi,\psi)$ and $\lambda_s(\w,\varphi)=\lambda_s(\w,\varphi,\psi)$
(see Proposition~4.4 in Novo et al.~\cite{noos7} for the result in an abstract setting). Since principal spectrums are going to be the dynamical objects in order to determine the persistence of the systems in Section \ref{sec-uniform persistence}, it is good to know that in the Dirichlet case the Lyapunov exponents can be calculated with the sup-norm in $C=C([-1,0],E)$, which is much easier to deal with numerically than the sup-norm in $C_\alpha$.
\begin{prop}\label{prop-Lyapunov exponents}
Assume that the map $f$ in~\eqref{family} is continuous and  of class $C^1$ in the $y$ and $\wit y$ variables. Let $K\subset \W\times C_\gamma$ be a compact positively invariant set and consider the linearized semiflow $L$ over $K$. Then, in the case of Dirichlet boundary conditions, for any $(\w,\varphi)\in K$ and $\psi \in C_\alpha$ one can calculate:
\[
\lambda_i(\w,\varphi,\psi)=\liminf_{t\to\infty} \frac{\log\|v_t(\w,\varphi,\psi)\|_{C}}{t}\,,\;
\lambda_s(\w,\varphi,\psi)=\limsup_{t\to\infty} \frac{\log\|v_t(\w,\varphi,\psi)\|_{C}}{t}\,.
\]
In particular, if $\lambda_i(\w,\varphi,\psi)=\lambda_s(\w,\varphi,\psi)$, then
\[
\lambda(\w,\varphi,\psi)=\lim_{t\to\infty} \frac{\log\|v_t(\w,\varphi,\psi)\|_{C}}{t}\,.
\]
\end{prop}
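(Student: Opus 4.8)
The plan is to show that $\|v_t(\w,\varphi,\psi)\|_{C_\alpha}$ and $\|v_t(\w,\varphi,\psi)\|_{C}$ are comparable up to multiplicative factors that are harmless on the exponential scale, so that they produce the same superior and inferior Lyapunov exponents. Throughout, write $v(t)=v(t,\w,\varphi,\psi)$; by Theorem~\ref{teor-linearized sk} this is the mild solution~\eqref{mild v}, which, being linear, is defined for all $t\ge0$. One of the two inequalities is free: since $E^\alpha\hookrightarrow E$ we have $C_\alpha=C([-1,0],E^\alpha)\hookrightarrow C([-1,0],E)=C$, so $\|v_t\|_C\le c\,\|v_t\|_{C_\alpha}$ for some $c>0$ and all $t$; dividing $\log\|v_t\|_C\le\log c+\log\|v_t\|_{C_\alpha}$ by $t$ and letting $t\to\infty$ gives $\limsup_t\frac{\log\|v_t\|_C}{t}\le\lambda_s(\w,\varphi,\psi)$ and $\liminf_t\frac{\log\|v_t\|_C}{t}\le\lambda_i(\w,\varphi,\psi)$. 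So what remains is the reverse bound: control $\|v_t\|_{C_\alpha}$ by $\|v_r\|_C$ at earlier times.

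For this I would fix $t\ge2$, use~\eqref{mild v} with the origin translated to $t-2$, and evaluate at $v(t+s)$ for $s\in[-1,0]$,
\begin{equation*}
v(t+s)=e^{(2+s)A}\,v(t-2)+\int_{t-2}^{t+s}e^{(t+s-r)A}\,D_{\!\varphi}F(\w{\cdot}r,z_r(\w,\varphi))\,v_r\,dr\,.
\end{equation*}
The decisive observation is that, by the pointwise formula~\eqref{derivada} together with the compactness of $K$ — which, via positive invariance $\{\tau(r,\w,\varphi):r\ge0\}\subset K$ and the continuity of the derivatives, makes $D_y f$ and $D_{\wit y}f$ bounded along the semiorbit — the operators $D_{\!\varphi}F(\w{\cdot}r,z_r(\w,\varphi))$ extend to bounded operators from $C$ \emph{(not merely from $C_\alpha$)} into $E$, with a common bound $N$. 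Estimating the $\alpha$-norm with~\eqref{cota}, using $(2+s)^{-\alpha}\le1$ and the integrability of $r\mapsto r^{-\alpha}$, and taking the supremum over $s\in[-1,0]$, I get a constant $C_1>0$ with $\|v_t\|_{C_\alpha}\le C_1\sup_{r\in[t-2,\,t]}\|v_r\|_C$ for $t\ge2$.

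Finally I would collapse this window supremum into a single evaluation by a forward growth bound: a routine Gronwall estimate on~\eqref{mild v}, using only that $(e^{sA})_{s\ge0}$ is uniformly bounded on $E$ over bounded time intervals, yields $\|v_t\|_C\le e^{N\tau}\,\|v_{t-\tau}\|_C$ for all $\tau\ge0$; hence $\sup_{r\in[t-2,t]}\|v_r\|_C\le e^{2N}\|v_{t-2}\|_C$ and therefore $\|v_t\|_{C_\alpha}\le C_2\,\|v_{t-2}\|_C$ for $t\ge2$, with $C_2=C_1e^{2N}$. Taking logarithms, dividing by $t$, and using $\frac{t-2}{t}\to1$, this pointwise bound gives $\lambda_s(\w,\varphi,\psi)\le\limsup_t\frac{\log\|v_t\|_C}{t}$ and $\lambda_i(\w,\varphi,\psi)\le\liminf_t\frac{\log\|v_t\|_C}{t}$; together with the free inequality from the first paragraph these are exactly the two asserted equalities, and the statement about $\lambda(\w,\varphi,\psi)$ follows at once.

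The main obstacle is obtaining a \emph{pointwise} reverse estimate. Formula~\eqref{mild v} only reconstructs $\|v_t\|_{C_\alpha}$ from the behaviour of $v$ over a whole time window — the fixed delay plus the lag needed before the singularity $r^{-\alpha}$ in~\eqref{cota} becomes harmless — and while such a window estimate already settles the $\limsup$ identity, it does not suffice for the $\liminf$ one, since the $\liminf$ of a sliding-window supremum is not controlled by the $\liminf$ of the function. This is precisely why the forward growth bound is needed, to replace $\sup_{r\in[t-2,t]}\|v_r\|_C$ by the single quantity $\|v_{t-2}\|_C$; the (routine) verifications that $D_{\!\varphi}F$ extends boundedly to $C$ and of the Gronwall estimates are where the calculations live.
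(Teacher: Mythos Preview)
Your proof is correct and uses the same ingredients as the paper's argument --- the variation of constants formula~\eqref{mild v}, the smoothing estimate~\eqref{cota}, the observation that $D_{\!\varphi}F(\w{\cdot}r,z_r(\w,\varphi))$ is bounded from $C$ into $E$ uniformly along the compact set $K$, and a Gronwall bound in the $C$-norm --- but organizes them differently. The paper treats $\lambda_s$ and $\lambda_i$ separately via subsequence arguments: for $\lambda_s$ it picks $t_n\uparrow\infty$ realizing the $\limsup$ in $C_\alpha$, passes to the time $t_n^1\in[t_n-1,t_n]$ where the sup is attained, and then to a further $t_n^2\in[t_n^1-2,t_n^1]$ to conclude $\|v(t_n^1)\|_\alpha\le c\,\|v(t_n^2)\|\le c\,\|v_{t_n^2}\|_C$; for $\lambda_i$ it starts from a sequence realizing the $\liminf$ in $C$ and needs the extra Gronwall step to control $\|v(t_n^2)\|$ by $\|v_{t_n}\|_C$. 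You instead establish the single \emph{pointwise} inequality $\|v_t\|_{C_\alpha}\le C_2\,\|v_{t-2}\|_C$ for all $t\ge 2$, which immediately yields both the $\limsup$ and the $\liminf$ identity at once. This is arguably cleaner and more transparent; the paper's route has the minor advantage of never invoking a forward growth bound for the $\limsup$ part alone.

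Two small points of precision, neither of which affects the argument: your forward growth constant should really be $M_0\,e^{M_0N\tau}$ with $M_0=\sup_{s\in[0,2]}\|e^{sA}\|_{\mathcal{L}(E)}$ rather than $e^{N\tau}$ (you acknowledge this implicitly when you say the semigroup is uniformly bounded on bounded time intervals); and in passing from $\log\|v_t\|_{C_\alpha}\le\log C_2+\log\|v_{t-2}\|_C$ to the $\liminf$ inequality you are using that $\frac{\log\|v_{t-2}\|_C}{t-2}$ stays bounded (which follows from the forward growth estimate you just proved) together with $\frac{t-2}{t}\to 1$. Both are routine, as you say.
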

\begin{proof}
Let us omit the dependence of $v_t$ on $(\w,\varphi,\psi)$ to simplify the writing, and set $\tilde\lambda_s=\limsup_{t\to\infty} \frac{\log\|v_t\|_{C}}{t}$. Since $C_\alpha\hookrightarrow C$, it is clear that $\tilde\lambda_s\leq \lambda_s$. To see that also $\lambda_s\leq \tilde\lambda_s$, let us take a sequence $t_n\uparrow \infty$ such that $\lambda_s=\lim_{n\to\infty} \frac{\log\|v_{t_n}\|_{C_\alpha}}{t_n}$. Since for each $n\geq 1$ there exists a $t_n^1\in [t_n-1,t_n]$ such that $\|v_{t_n}\|_{C_\alpha}=\|v(t_n^1)\|_{\alpha}$, we have that $\lambda_s=\lim_{n\to\infty} \frac{\log\|v(t_n^1)\|_{\alpha}}{t_n^1}$. Now,  for the map $F:\W\times C_\alpha\to E$ defined in~\eqref{F}, we can write by the variation of constants formula~\eqref{variation constants},
\[
v(t_n^1)=e^A\,v(t_n^1-1) +\int_0^1 e^{(1-s)A}\,D_{\!\varphi}F(\w{\cdot}(t_n^1-1+s),z_{t_n^1-1+s}(\w,\varphi))\,v_{t_n^1-1+s}\,ds\,.
\]
Now, note that we can also consider $F$ as defined on $\W\times C$ with values in $E$. Then, taking $M=\sup\{\|D_{\!\varphi}F(\tilde \w, \tilde \varphi)\|_{\mathcal{L}(C,E)}\mid (\tilde \w, \tilde \varphi)\in K\}<\infty$, 
we can apply~\eqref{cota} to get
\[
\|v(t_n^1)\|_{\alpha}\leq M'_\alpha\, \|v(t_n^1-1)\|+\int_0^1 M'_\alpha\, (1-s)^{-\alpha}\, M\, \|v_{t_n^1-1+s}\|_C\,ds\,.
\]
As before, for each $n\geq 1$, there exists a $t_n^2\in [t_n^1-2,t_n^1]$ such that $\|v_{t_n^1-1+s}\|_C\leq \|v(t_n^2)\|$ for any $s\in [0,1]$, and in particular $\|v(t_n^1-1)\|\leq \|v(t_n^2)\|$. Then,
\[
\|v(t_n^1)\|_{\alpha}\leq M'_\alpha\, \|v(t_n^2)\|+\int_0^1 M'_\alpha\, (1-s)^{-\alpha}\, M\, \|v(t_n^2)\|\,ds=\big(1+\frac{M}{1-\alpha}\big) M'_\alpha\, \|v(t_n^2)\| \,,
\]
for any $n\geq 1$. Since $\|v(t_n^2)\| \leq \|v_{t_n^2}\|_C $, we can easily conclude that
\[
\lambda_s=\lim_{n\to\infty} \frac{\log\|v(t_n^1)\|_{\alpha}}{t_n^1}\leq \lim_{n\to\infty} \frac{\log\|v_{t_n^2}\|_{C}}{t_n^2} \leq \limsup_{t\to\infty} \frac{\log\|v_t\|_{C}}{t}= \tilde\lambda_s\,.
\]
\par
Let us now deal with $\tilde\lambda_i=\liminf_{t\to\infty} \frac{\log\|v_t\|_{C}}{t}$. Once more the inequality  $\tilde\lambda_i\leq \lambda_i$ is clear, so that it remains to prove that $\lambda_i\leq \tilde\lambda_i$. This time we take a sequence $t_n\uparrow\infty$ such that $\tilde\lambda_i=\lim_{n\to\infty} \frac{\log\|v_{t_n}\|_{C}}{t_n}$. Now, arguing as in the first paragraph, associated with the sequence $\{t_n+2\}_{n\geq 1}$ we can find a sequence $\{t_n^2\}_{n\geq 1}$ with $t_n^2\in [t_n-1,t_n+2]$ such that $\|v_{t_n+2}\|_{C_\alpha}\leq c\,\|v(t_n^2)\|$ for $c=(1+\frac{M}{1-\alpha}) M'_\alpha>0$ and for any $n\geq 1$. Note that if we prove that $\|v(t_n^2)\|\leq \tilde c\,\|v_{t_n}\|_C$ for every $n\geq 1$, for a certain $\tilde c>0$,  we are done, since then:
\[
\lambda_i=\liminf_{t\to\infty} \frac{\log\|v_t\|_{C_\alpha}}{t}\leq \lim_{n\to\infty}\frac{\log\|v_{t_n+2}\|_{C_\alpha}}{t_n+2}
\leq \lim_{n\to\infty} \frac{\log\|v_{t_n}\|_{C}}{t_n}=\tilde \lambda_i\,.
\]
For that, once more we use the variation of constants formula to write, for $r\in [0,2]$,
\[
v(t_n+r)=e^{rA}\,v(t_n) +\int_0^r e^{(r-l)A}\,D_{\!\varphi}F(\w{\cdot}(t_n+l),z_{t_n+l}(\w,\varphi))\,v_{t_n+l}\,dl\,.
\]
Then, consider the map $h_n(s)=\des\sup_{r\in [-1,s]}\|v(t_n+r)\|$ defined for $s\in [0,2]$. Note that if for some $s\in [0,2]$, $h_n(s)=\|v(t_n+r_0)\|$ for some $-1\leq r_0\leq 0$, then $h_n(s)\leq \|v_{t_n}\|_C$. Else, $h_n(s)=\des\sup_{r\in [0,s]}\|v(t_n+r)\|$ and we can bound
\[
h_n(s)\leq M_0\,\|v_{t_n}\|_C+ \int_0^s M_0\,M\,h_n(l)\,dl\,
\]
for the constants $M_0=\max\{1,\sup_{s\in [0,2]}\|e^{sA}\|_{\mathcal{L}(E)}\}$ and $M$ the same as before, so that this inequality holds for any $s\in [0,2]$. Applying of the Gronwall's lemma, we obtain $h_n(s)\leq \tilde c \,\|v_{t_n}\|_C$ for an appropriate $\tilde c>0$ independent of $n\geq 1$, for any $s\in [0,2]$. In particular $\|v(t_n^2)\|\leq h_n(2)\leq \tilde c\,\|v_{t_n}\|_C$ for every $n\geq 1$. The proof is finished.
\end{proof}
\section{Persistence for quasimonotone systems of parabolic PFDEs}\label{sec-uniform persistence}\noindent
In this section  the properties of uniform and strict persistence are studied for quasimonotone and regular parabolic problems of type~\eqref{family}, $\w\in\W$. More precisely, we  assume the following conditions on $f$:
\begin{itemize}
\item[(C1)] $f(\w,x,y,\wit y)$ is continuous and of class  $C^1$ in $(y,\wit y)$.
\item[(C2)] The maps $D_y f(\w{\cdot}t,x,y,\wit y)$ and
$D_{\wit y} f(\w{\cdot}t,x,y,\wit y)$ are Lipschitz in $(y,\wit y)$ in bounded sets, uniformly for $\w\in \Om$ and $x\in\bar U$.
\item[(C3)] $f(\w{\cdot}t,x,y,\wit y)$ as well as the maps $D_yf(\w{\cdot}t,x,y,\wit y)$ and
$D_{\wit y} f(\w{\cdot}t,x,y,\wit y)$ satisfy conditions $(C^{\theta}(t))$ and $(C^{2\theta}(x))$, for some $\theta\in(0,1/2)$.
\item[(C4)] Quasimonotone condition: for any
$(\w,x,y,\wit y)\in \W\times \bar U\times\R^n\times\R^n$,
\[
\frac{\partial f_i}{\partial y_j}(\w,x,y,\wit y) \geq 0 \;\,
 \text{ for } i\not= j \;\, \text{ and }\;  \frac{\partial f_i}{\partial \wit y_j}(\w,x,y,\wit y)\ge 0\,\;\text{ for any}\;\, i, j\,.
\]
\end{itemize}
\par
As proved in Theorem \ref{teor-linearized sk}, with (C1) the skew-product semiflow $\tau(t,\w,\varphi)$ is of class $C^1$ in $\varphi$. Condition $(C^{2\theta}(x))$ in (C3) is required so that the solutions of the IBV problems with delay, as well as those of the linearized problems, are smooth enough in order to apply the classical parabolic maximum or minimum principles; see Theorems~\ref{teor-space regularity-Neumann} and~\ref{teor-space regularity-Dirich}. Finally, note that (C4) is the usual way to write the quasimonotone condition (QM) under regularity assumptions.
\par
First of all, by linearizing the problems, in the Dirichlet case we can now establish the monotonicity of the skew-product semiflow removing condition (DM) in Proposition~\ref{prop-monotone}. Recall that $C_\alpha=C([-1,0],E^\alpha)$.
\begin{prop}\label{prop-strong monotonicity-Dirichlet}
Consider the family of parabolic problems with delay~\eqref{family}, $\w\in\W$ with Dirichlet boundary conditions and assume that  $f$ satisfies $\rm{(C1)}$-$\rm{(C4)}$.
Then:
\begin{itemize}
\item[(i)] The induced skew-product semiflow on $\W\times C_\alpha$
is monotone, that is, if $\varphi,\psi\in
C_\alpha$ with $\varphi\leq \psi$, then $z_t(\w,\varphi)\leq
z_t(\w,\psi)$ for any $\w\in\W$ and any $t\geq 0$ where both terms
are defined.
\item[(ii)] Given $\w\in\W$ and $\varphi,\psi\in
C_\alpha$ with $\varphi\leq \psi$ such that $z(t,\w,\varphi)$ and $z(t,\w,\psi)$ are defined for
$t\in[0,\beta]$ for some $\beta>0$, there exists an $L=L(\w,\varphi,\psi,\beta)>0$ such that
for each $i=1,\ldots,n$, and for each $t\in[0,\beta]$,
\[
z_i(t,\w,\psi)-z_i(t,\w,\varphi)\geq e^{-L
t}\,e^{t A_i}\,(\psi_i(0)-\varphi_i(0))\,.
\]
\end{itemize}
\end{prop}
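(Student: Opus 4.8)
The idea is to reduce both statements to facts about the linear equation satisfied by the difference of the two solutions, and to exploit that this difference takes values in $E^\alpha\hookrightarrow C_0(\bar U,\R^n)$, hence vanishes on $\partial U$; this is precisely what makes the compatibility condition (DM) superfluous here. Fix $\w\in\W$ and $\varphi\le\psi$ in $C_\alpha$ such that $z(t,\w,\varphi)$, $z(t,\w,\psi)$ are defined on $[0,\beta]$, let $\rho>0$ bound $\|\varphi(s)(x)\|$, $\|\psi(s)(x)\|$, $\|z(t,\w,\varphi)(x)\|$, $\|z(t,\w,\psi)(x)\|$ for $s\in[-1,0]$, $t\in[0,\beta]$, $x\in\bar U$, and put $w(t):=z(t,\w,\psi)-z(t,\w,\varphi)$. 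Subtracting the integral equations~\eqref{variation constants} for the two mild solutions and applying the integral form of the mean value theorem to $f$ in the $(y,\wit y)$ variables (legitimate by (C1)), one finds that $w$ is the mild solution of the linear nonautonomous retarded ACP
\[
w'(t)=A\,w(t)+B(t)\,w_t\,,\qquad w_0=\psi-\varphi\ge 0\,,
\]
where $B(t):C_\alpha\to E$ acts componentwise by
\[
[B(t)\,w_t]_i(x)=\sum_{j=1}^n b_{ij}(t,x)\,w_j(t)(x)+\sum_{j=1}^n \wit b_{ij}(t,x)\,w_j(t-1)(x)\,,
\]
and $b_{ij}(t,x)$, $\wit b_{ij}(t,x)$ are the integrals over $\la\in[0,1]$ of, respectively, $\partial f_i/\partial y_j$ and $\partial f_i/\partial\wit y_j$ evaluated at $\big(\la\,z(t,\w,\psi)(x)+(1-\la)z(t,\w,\varphi)(x),\ \la\,z(t-1,\w,\psi)(x)+(1-\la)z(t-1,\w,\varphi)(x)\big)$, with $\w{\cdot}t$ and $x$ frozen. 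By (C4) the matrix $(b_{ij}(t,x))_{i,j}$ has nonnegative off-diagonal entries and $(\wit b_{ij}(t,x))_{i,j}$ is entrywise nonnegative; by (C2)--(C3) and the $\theta$-H\"older regularity in $t$ of the trajectories, $b_{ij}$, $\wit b_{ij}$ are continuous, $\theta$-H\"older in $t$, and bounded by a constant depending only on $\rho$. (One could equally run the argument with $w$ replaced by the variational solutions $v_t(\w,\varphi^\la,\psi-\varphi)$ along the segment $\varphi^\la=\varphi+\la(\psi-\varphi)$ provided by Theorem~\ref{teor-linearized sk}, and integrate in $\la$.)

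\emph{Part (i).} Since $z(t,\w,\varphi),z(t,\w,\psi)\in E^\alpha\hookrightarrow C_0(\bar U,\R^n)$ for $t\ge 0$ and $\varphi(s),\psi(s)\in E^\alpha$ for $s\in[-1,0]$, one has $w(t)(x)=w(t-1)(x)=0$ for every $x\in\partial U$, hence $[B(t)w_t](x)=0$ on $\partial U$: the zeroth-order term of the linear problem above is $C_0(\bar U,\R^n)$-valued, so that problem genuinely lives in the $C_0$-framework, with no boundary obstruction. Together with the sign structure of $(b_{ij})$, $(\wit b_{ij})$, this makes it a quasimonotone linear retarded problem over $C_0(\bar U,\R^n)$, and the positivity/comparison results of Martin and Smith~\cite{masm0,masm} apply and give $w(t)\ge 0$ on $[0,\beta]$, i.e. (i).

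\emph{Part (ii) and the main difficulty.} This transcribes the proof of Proposition~\ref{prop-monotone}(ii) to the linear equation for $w$. As there, by density one first reduces to $\varphi,\psi\in C^\theta([-1,0],E)$, approximating a general pair by sequences $\varphi_n\le\varphi\le\psi\le\psi_n$ whose trajectories stay below a common bound $\rho$; for such $\varphi,\psi$ the coefficients $B(t)$ are $\theta$-H\"older in $t$, so $w$ is a classical solution of $w'=Aw+B(t)w_t$ on $[0,\beta]$ (same reasoning as in Theorem~\ref{teor-time regularity-Dirichlet}), and for each $i$ the map $\nu_i(t):=e^{Lt}\,w_i(t)$ takes values in $D(A_i)$ for $t>0$ and satisfies $\nu_i'(t)=A_i\,\nu_i(t)+g_i(t)$ with $g_i(t)=L\,\nu_i(t)+e^{Lt}\,[B(t)w_t]_i$. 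Choosing $L=L_\rho$ a uniform bound for $|b_{ii}(t,x)|$ on the relevant set, and using $w_t\ge 0$ from (i) together with the sign conditions of (C4), we get $[B(t)w_t]_i\ge b_{ii}(t,\cdot)\,w_i(t)\ge -L\,w_i(t)$, whence $g_i(t)\ge L\,\nu_i(t)-L\,e^{Lt}w_i(t)=0$. The crucial point, and the only place where the removal of (DM) is visible, is that $g_i(t)\in C_0(\bar U)$ \emph{automatically}, since $\nu_i(t)$, $w_j(t)$ and $w_j(t-1)$ all vanish on $\partial U$; no hypothesis on $f(\cdot,\cdot,0,0)$ on $\partial U$ is needed. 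As $(e^{tA_i})_{t\ge 0}$ is a positive semigroup on $C_0(\bar U)$ (Smith~\cite{smit}), the variation of constants formula gives
\[
\nu_i(t)=e^{tA_i}\nu_i(0)+\int_0^t e^{(t-s)A_i}g_i(s)\,ds\ \ge\ e^{tA_i}\nu_i(0)=e^{tA_i}\big(\psi_i(0)-\varphi_i(0)\big)\,,
\]
i.e. $z_i(t,\w,\psi)-z_i(t,\w,\varphi)\ge e^{-Lt}\,e^{tA_i}(\psi_i(0)-\varphi_i(0))$; passing to the limit along $\{\varphi_n\},\{\psi_n\}$ closes (ii). The main obstacle is exactly the bookkeeping that stands in for (DM): checking that $B(t)w_t$ remains $C_0(\bar U,\R^n)$-valued and $\theta$-H\"older in $t$, and that $L$ may be chosen uniform along the approximating trajectories. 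The positivity invoked in (i) and the regularity/approximation step are routine, being essentially those of Proposition~\ref{prop-monotone} and Theorem~\ref{teor-time regularity-Dirichlet}.
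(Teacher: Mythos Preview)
Your proof is correct and follows essentially the same approach as the paper's. Both arguments linearize the difference $z(t,\w,\psi)-z(t,\w,\varphi)$ and observe that the resulting linear problem automatically satisfies condition~(DM) (equivalently, that the zeroth-order term is $C_0(\bar U,\R^n)$-valued), so that Proposition~\ref{prop-monotone} applies without any hypothesis on $f(\cdot,\cdot,0,0)$ at the boundary. The only packaging difference is that the paper integrates the variational solutions $v_t(\w,\varphi^\la,\psi-\varphi)$ over $\la\in[0,1]$ via Theorem~\ref{teor-linearized sk} and then invokes Proposition~\ref{prop-monotone} for each fixed $\la$, whereas you first integrate the \emph{coefficients} over $\la$ (applying the mean value theorem to $f$ pointwise in $x$) to obtain a single linear retarded problem for $w$, and then rerun the argument of Proposition~\ref{prop-monotone}(ii) directly. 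Your route is slightly more elementary in that it does not need the $C^1$-smoothness of the semiflow from Theorem~\ref{teor-linearized sk}, and it sidesteps the paper's ``without loss of generality'' assumption that $z(t,\w,\la\psi+(1-\la)\varphi)$ is defined on $[0,\beta]$ for every $\la$.
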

\begin{proof}
Note that with any boundary conditions, by the regularity assumptions on $f$ we can consider the linearized IBV problem of~\eqref{family} along the semiorbit of each  fixed $(\w,\varphi)\in \W\times C_\gamma$,
\begin{equation}\label{linear family}
\left\{\begin{array}{l} \des\frac{\partial u}{\partial t}=
 D\Delta u+g(\tau(t,\w,\varphi),x,u(t,x),u(t-1,x))\,,\; t\in (0,\beta]\,,\;\,x\in \bar U,\\[.2cm]
\bar\alpha(x)\,u(t,x)+\delta\,\des\frac{\partial u}{\partial n}(t,x) =0\,,\quad  t\in (0,\beta]\,,\;\,x\in \partial U,\\[.2cm]
u(s,x)=\psi(s,x)\,, \quad s\in [-1,0]\,,\;\,x\in \bar U,
\end{array}\right.
\end{equation}
provided that the mild solution $z(t,\w,\varphi)$ is defined on the interval $[0,\beta]$, where the map $g:(\W\times C_\gamma)\times\bar U\times \R^n\times\R^n\to\R^n$, linear in $(u,v)$, is defined by (see~\eqref{derivada})
\[g(\w,\varphi,x,u,v)=D_{y} f(\w,x,\varphi(0)(x),\varphi(-1)(x))\,u
+D_{\wit y} f(\w,x,\varphi(0)(x),\varphi(-1)(x))\,v\,.\]
\par
Under assumptions (C1)-(C4) on $f$, it is easy to check that $g$ satisfies all the conditions in order to apply  Proposition~\ref{prop-monotone} to each linearized problem along the orbit of $(\w,\varphi)$ if $\varphi\in C^\theta([-1,0],E)$. Let us now restrict to the Dirichlet case.
\par
Arguing as in the proof of Proposition~\ref{prop-monotone}~(ii), we just need to consider $\w\in\W$ and $\varphi,\psi\in
C_\alpha$ with $\varphi\leq \psi$ such that $\varphi,\,\psi\in C^\theta([-1,0],E)$, and  $z(t,\w,\varphi)$ and $z(t,\w,\psi)$ are defined for
$t\in[0,\beta]$ for some $\beta>0$: in the general case, we can approximate $\varphi$ and $\psi$ by $\theta$-H\"{o}lder continuous maps.
Besides, we can assume without loss of generality that also $z(t,\w,\lambda \psi + (1-\lambda)\varphi)$ is defined for
$t\in[0,\beta]$  for every $\lambda\in (0,1)$. Then, thanks to Theorem~\ref{teor-linearized sk} we can write for any $t\in (0,\beta]$,
\begin{equation}\label{mean value}
z_t(\w,\psi)-z_t(\w,\varphi)=\int_0^1 D_{\!\varphi} z_t(\w,\lambda\, \psi + (1-\lambda)\,\varphi)\,(\psi-\varphi)\,d\lambda
\end{equation}
where $D_{\!\varphi} z_t(\w,\lambda \psi + (1-\lambda)\varphi)\,(\psi-\varphi)=v_t(\w,\lambda \psi + (1-\lambda)\varphi,\psi-\varphi)$ for the mild solution $v$ of the variational retarded ACP along the semiorbit of $(\w,\lambda \psi + (1-\lambda)\varphi)$ with initial condition $\psi-\varphi$ (see~\eqref{variational}), which is just the ACP built from the linearized IBV problem, to which Proposition~\ref{prop-monotone} applies. Therefore (i) immediately follows, since   $D_{\!\varphi} z_t(\w,\lambda\, \psi + (1-\lambda)\,\varphi)\,(\psi-\varphi)\geq 0$ for any $\lambda\in [0,1]$.
\par
Now, to see (ii)  just write
\begin{equation}\label{mean value componente}
z_i(t,\w,\psi)-z_i(t,\w,\varphi)=\int_0^1 v_i(t,\w,\lambda\, \psi + (1-\lambda)\,\varphi,\psi-\varphi)\,d\lambda\,,
\end{equation}
recall that $v$ is linear with respect to the initial value, and apply
Proposition~\ref{prop-monotone}~(ii) to the linearized problem for each $\lambda\in [0,1]$.
\end{proof}
In the next result conditions are given to provide the existence of a continuous separation of type~II over a minimal set $K\subset \W\times C_\gamma$: see Section \ref{sec-linearized sem} for the definition.
\begin{teor}\label{teor-sep cont tipo II}
Consider the family of parabolic problems with delay~\eqref{family}, $\w\in\W$
with $f$ satisfying conditions $\rm{(C1)}$-$\rm{(C4)}$, and assume that there exists a minimal set $K\subset
\W\times C_\gamma$ for the induced skew-product semiflow $\tau$. For the $n\times n$ real matrices
\begin{equation}\label{A and B}
 D_y f(\w,x,y,\wit y) = [a_{ij}(\w,x,y,\wit y)]\,, \quad
D_{\wit y} f(\w,x,y,\wit y) =  [b_{ij}(\w,x,y,\wit y)]
\end{equation}
define
\begin{align*}
\bar a_{ij} &= \sup\{ a_{ij}(\w,x,\varphi(0,x),\varphi(-1,x))\mid (\w,\varphi)\in K,\, x\in\bar U\}\,\;
\text{for }\, i\not= j\,, \; \text{and }\,\bar a_{ii}=0\,,
\\ \bar b_{ij} &= \sup\{ b_{ij}(\w,x,\varphi(0,x),\varphi(-1,x))\mid (\w,\varphi)\in K,\, x\in\bar U\}\,\;
\text{for }\, i\not= j\,, \; \text{and }\,\bar b_{ii}=0\,,
\end{align*}
and consider the matrix
\begin{equation}\label{A+B}
\bar A+ \bar B=[\bar a_{ij}+\bar b_{ij}]\,.
\end{equation}
Then, if the matrix $\bar A+ \bar B$ is
irreducible,
\begin{itemize}
\item[(i)]
there exists a $t_*\geq 1$ such that for each $(\w,\varphi)\in K$ the linear
operator $D_{\!\varphi} z_{t_*}(\w,\varphi)$ satisfies the following
dichotomy property: given $\psi\in C_\gamma$ with $\psi>0$,
either $D_{\!\varphi} z_{t_*}(\w,\varphi)\,\psi =0$ or
$D_{\!\varphi} z_{t_*}(\w,\varphi)\,\psi \gg 0$;
\item[(ii)] provided that $K$ admits a flow extension, there is a continuous separation of type~{\rm II} over $K$.
\end{itemize}
\end{teor}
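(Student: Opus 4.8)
The plan is to derive the dichotomy~(i) from the strong maximum principle for the underlying scalar parabolic equations, the quasimonotone structure~(C4), and the interplay between the irreducibility of $\bar A+\bar B$ and the minimality of $K$, and then to obtain~(ii) by feeding~(i) into the abstract criterion for the existence of a continuous separation of type~II. For~(i), fix $(\w,\varphi)\in K$ and $\psi\in C_\gamma$ with $\psi>0$, and write $v(t)=v(t,\w,\varphi,\psi)$, so that $D_{\!\varphi}z_t(\w,\varphi)\,\psi=v_t$. First I would note that $v_t\ge0$ for every $t\ge0$: by Proposition~\ref{prop-strong monotonicity-Dirichlet} the difference quotients $\varepsilon^{-1}(z_t(\w,\varphi+\varepsilon\psi)-z_t(\w,\varphi))$ are $\ge0$ for small $\varepsilon>0$ and converge to $v_t$, and the cone is closed. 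Then, using $v\ge0$, condition~(C4), and a harmless shift $A_i\rightsquigarrow A_i-\lambda_0I$ with $\lambda_0$ large enough to absorb the (possibly negative) diagonal terms $a_{ii}$ along $K$ — which alters no positivity property, since $e^{t(A_i-\lambda_0I)}=e^{-\lambda_0t}e^{tA_i}$ — the component-wise variation of constants formula yields $v_i(t)\ge e^{-\lambda_0(t-s)}e^{(t-s)A_i}v_i(s)$ for $0\le s\le t$. Because each $e^{rA_i}$, $r>0$, is strongly positive (strong maximum principle, and the Hopf lemma in the Dirichlet case, as recalled after Proposition~\ref{prop-monotone}), it follows that for $t\ge1$ every $v_i(t)$ is either $0$ or $\gg0$, that the support set $P(t)=\{i:v_i(t)\gg0\}$ is nondecreasing in $t\ge1$, and that $v_i\not\equiv0$ somewhere on a unit-length interval ending at $t\ge1$ forces $i\in P(t)$.

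The core step is to propagate positivity through the coupling graph of $\bar A+\bar B$ with a waiting time that is uniform over $K$. Suppose $i\in P(t)$ and $(\bar A+\bar B)_{ji}>0$, say $\bar a_{ji}>0$ (the case $\bar b_{ji}>0$ being the same up to a one-unit time shift). By the definition of $\bar a_{ji}$ and continuity, there are a nonempty open $\mathcal N\subseteq K$, a nonempty open $V\subseteq U$ and $\varepsilon_0>0$ with $a_{ji}(\w',x',\varphi'(0,x'),\varphi'(-1,x'))>\varepsilon_0$ for $(\w',\varphi')\in\mathcal N$ and $x'\in V$. Since $K$ is minimal, every forward semiorbit is dense in $K$, and a standard compactness argument yields a uniform syndeticity constant $\ell(\mathcal N)>0$: every interval of length $\ell(\mathcal N)$ contains a time $l$ with $\tau(l,\w,\varphi)\in\mathcal N$, for every $(\w,\varphi)\in K$. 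Keeping only the contribution of $v_i$ in the variation of constants formula for $v_j$ and using $v_i\gg0$ on $[t,\infty)$, the integrand is $\gg0$ on the nonempty open set of times where the orbit lies in $\mathcal N$, so $v_j(t+\ell(\mathcal N))\gg0$, i.e. $j\in P(t+\ell(\mathcal N))$. Let $\Delta:=1+\max\ell(\mathcal N)$, the maximum over the finitely many open sets attached to the nonzero entries of $\bar A+\bar B$. Irreducibility means any two indices are joined by a directed path of length at most $n-1$ in the coupling graph, so iterating the previous implication gives: $P(t_1)\ne\emptyset$ implies $v(t)\gg0$ for every $t\ge t_1+(n-1)\Delta$.

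With this at hand I would set $t_*:=2+(n-1)\Delta$ (in particular $t_*\ge1$), a constant depending only on $K$. Given $(\w,\varphi)\in K$ and $\psi>0$, suppose $D_{\!\varphi}z_{t_*}(\w,\varphi)\,\psi=v_{t_*}\ne0$. Then $v_1\ne0$: otherwise forward uniqueness for the linear variational problem would force $v\equiv0$ for all $t\ge1$, contradicting $v_{t_*}\ne0$. Hence $P(1)\ne\emptyset$ by the first paragraph, so $v(t)\gg0$ for $t\ge t_*-1$, and therefore $v_{t_*}\gg0$. Together with the trivial alternative $v_{t_*}=0$, this establishes~(i).

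For~(ii), assume in addition that $K$ admits a flow extension. Over the compact positively invariant set $K$ the linearized semiflow $L$ of Section~\ref{sec-linearized sem} is continuous and fiberwise linear; it is monotone (again by Proposition~\ref{prop-strong monotonicity-Dirichlet} and the difference-quotient argument), $L_t$ is compact for $t>1$ (the operators $D_{\!\varphi}z_t(\w,\varphi)$ being compact for $t>1$, by the argument used for $\tau_t$, cf. Travis and Webb~\cite{trwe}), $C_\gamma$ is strongly ordered, and by~(i) $L$ satisfies the strong-positivity dichotomy at time $t_*$. These are exactly the hypotheses of the abstract existence criterion for a continuous separation of type~II over a minimal set with a flow extension (Novo et al.~\cite{noos6}; see also Obaya and Sanz~\cite{obsa}), which provides the families $\{X_1(\w,\varphi)\}$ and $\{X_2(\w,\varphi)\}$ satisfying (S1), (S2), (S4) and (S5); property (S3)$'$ then follows at once from~(i) and the invariance in~(S4), since $D_{\!\varphi}z_{t_*}(\w,\varphi)\,\phi$ lies in $X_2(\tau(t_*,\w,\varphi))$ and so cannot be $\gg0$, hence vanishes whenever $\phi\in X_2(\w,\varphi)$ with $\phi>0$. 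I expect the main obstacle to be the uniform-in-$K$ waiting time in the propagation step: it rests on the uniform syndeticity of return times, which comes from the minimality and compactness of $K$, and on a careful sign analysis of the integral terms, particularly in the Dirichlet case.
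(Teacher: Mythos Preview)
Your proposal is correct and follows essentially the same strategy as the paper's proof, which defers to Theorem~5.1 in Novo et al.~\cite{nonuobsa} for the Neumann case and remarks that the Robin and Dirichlet cases go through verbatim. The core mechanism---Proposition~\ref{prop-monotone}~(ii) applied to the linearized systems to get the componentwise lower bound $v_i(t)\ge e^{-L(t-s)}e^{(t-s)A_i}v_i(s)$, strong positivity of $e^{tA_i}$, and then propagation through the coupling graph of $\bar A+\bar B$ using a uniform return time supplied by minimality of $K$---is exactly what you describe, and your reduction of~(ii) to the abstract criterion in~\cite{noos6} via eventual compactness of $D_{\!\varphi}z_t(\w,\varphi)$ is the same.

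The only noteworthy difference is one of style in the propagation step: the argument in~\cite{nonuobsa} (echoed in the paper in case~(B3.1) of Theorem~\ref{teoremaDelay}) works with \emph{classical} solutions and invokes the pointwise minimum principle for scalar parabolic PDEs, which is why the paper's proof pauses to observe that every $\varphi$ with $(\w,\varphi)\in K$ lies in $C^{1,2}([-1,0]\times\bar U,\R^n)$ (by going backwards along the flow extension and applying Theorems~\ref{teor-space regularity-Neumann}/\ref{teor-space regularity-Dirich}). Your route via the variation-of-constants formula and the strong positivity of the semigroup on $E^\gamma$ bypasses the need for that classical regularity. Both lead to the same uniform $t_*$; your version is arguably a bit cleaner in the Dirichlet case, where one must otherwise keep track of the Hopf boundary condition through the pointwise argument.
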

\begin{proof}
This result is Theorem~5.1 in Novo et al.~\cite{nonuobsa} for the case of Neumann boundary conditions. The proof for Robin or Dirichlet boundary conditions follows step by step the same arguments, so that we only make some remarks.
\par
First of all, note that in the minimal set $K$ there are backwards extensions of semiorbits, and this implies that if $(\w,\varphi)\in K$, $\varphi\in C_\gamma$ has some specific regularity properties; more precisely $\varphi\in C^{1,2}([-1,0]\times \bar U,\R^n)$. This follows from Theorem~\ref{teor-space regularity-Neumann} or Theorem~\ref{teor-space regularity-Dirich}, moving backwards in the semiorbit with $t>2$ and then gaining regularity by coming back forwards.
\par
Second, when we look at the family of linearized IBV problems along the semiorbits of $(\w,\varphi)\in K$, the map $g$  in~\eqref{linear family} satisfies conditions (C), $(C^\theta(t))$ and $(C^{2\theta}(x))$ uniformly for $(\w,\varphi)\in K$, and Proposition~\ref{prop-monotone}~(ii) is repeatedly used.
\par
Finally, (ii) follows from the abstract Theorem~5.4 in Novo et al.~\cite{noos6} provided that the operators $D_{\!\varphi}z_t(\w,\varphi)$ are eventually compact, which happens for $t>1$. 
\end{proof}
Before we state the main result, we give the appropriate definitions of uniform and strict persistence in the area above a compact $\tau$-invariant set $K\subset \Om\times C_\gamma$, which were introduced in Novo et al.~\cite{noos7} and in Obaya and Sanz~\cite{obsa}, respectively.
\begin{defi}\label{defi-persistence}
Let $K\subset \W\times C_\gamma$ be a compact
$\tau$-invariant set for the continuous and monotone semiflow $\tau$.

(i) The semiflow $\tau$ is said to be {\it
uniformly persistent} ({\it u-persistent} for short) in the region situated {\it strongly above}
$K$ if there exists a $\psi_0\in C_\gamma$, $\psi_0\gg 0$ such
that for any $(\w,\varphi)\in K$ and any $\phi\gg \varphi$  there
exists a time $t_0=t_0(\w,\varphi,\phi)$ such that $z_t(\w,\phi)\geq
z_t(\w,\varphi)+\psi_0$  for any $t\geq
t_0$.

(ii) The semiflow $\tau$ is said to be {\it strictly persistent at $0$} ({\it $s_0$-persistent} for short) in the region situated above
 $K$ if there  exists a collection of strictly positive maps $\psi_1,\ldots,\psi_N\in C_\gamma$, $\psi_i>0$ for every $i$, such that for any $(\w,\varphi)\in K$ and any $\phi\geq  \varphi$ with $\phi(0)> \varphi(0)$  there
exists a time $t_0=t_0(\w,\varphi,\phi)$ such that $z_t(\w,\phi)\geq
z_t(\w,\varphi)+\psi_i$ for any $t\geq
t_0$,  for one of the maps $\psi_1,\ldots,\psi_N$.
\end{defi}

\begin{teor}\label{teoremaDelay}
Consider the family of problems with delay~\eqref{family}, $\w\in\W$
with $f$ satisfying conditions $\rm{(C1)}$-$\rm{(C4)}$, and assume that there exists a minimal set $K\subset
\W\times C_\gamma$ for the induced skew-product semiflow $\tau$ which admits a flow extension.
 For each
$(\w,\varphi)\in  K$ consider the linearized IBV problem of~\eqref{family}
along the semiorbit of $(\w,\varphi)$, given in~\eqref{linear family}, and calculate the matrix
$\bar A+\bar B=[\bar a_{ij}+\bar b_{ij}]$ given in~\eqref{A+B}.

Without loss of generality, we can assume that the
matrix $\bar A+\bar B$  has the form
 \begin{equation}\label{triangular}
\left[\begin{array}{cccc}
\bar A_{11}+\bar B_{11}  & 0 &\ldots & 0 \\
\bar A_{21}+\bar B_{21}  & \bar A_{22} +\bar B_{22}&  \ldots& 0 \\
\vdots & \vdots &\ddots  & \vdots \\
\bar A_{k1}+\bar B_{k1} & \bar A_{k2}+\bar B_{k2} & \ldots& \bar A_{kk}+\bar B_{kk}
\end{array}\right]\,
\end{equation}
with irreducible diagonal blocks, denoted by  $\bar A_{11}+\bar B_{11},\ldots, \bar
A_{kk}+\bar B_{kk}$, of size $n_1,\ldots,n_k$ respectively
($n_1+\cdots + n_k=n$).
\par
For each $j=1,\ldots,k$, let us denote by $I_j$ the set formed by
the $n_j$ indexes corresponding to the rows of the block $\bar
A_{jj}+\bar B_{jj}$, and
 let $L_j$ be the linear skew-product semiflow induced on $K\times C([-1,0],\Pi_{i\in I_j}  E_i^\gamma)$
by the solutions of the $n_j$-dimensional linear systems for  $(\w,\varphi)\in K$ given by
\begin{equation}\label{bloque j}
\left\{\begin{array}{l} \des\frac{\partial u}{\partial t}=
 D_j\Delta u+A_{jj}(\w{\cdot}t,x,z(t,\w,\varphi)(x),z(t-1,\w,\varphi)(x))\,u(t,x)\\[.2cm]
  \; + B_{jj}(\w{\cdot}t,x,z(t,\w,\varphi)(x),z(t-1,\w,\varphi)(x))\,u(t-1,x)\,,\;\, t>0\,,\;x\in \bar U,
\\
\bar\alpha_j(x)\,u+\delta\,\des\frac{\partial u}{\partial n} =0\,,\quad  t>0\,,\;\,x\in \partial U,\\[.2cm]
u(s,x)=\psi^j(s,x)\,,\quad s\in [-1,0]\,,\;\,x\in \bar U,
\end{array}\right.
\end{equation}
for the corresponding diagonal blocks $A_{jj}$ and $B_{jj}$ of $D_y f$ and $D_{\wit y} f$ in~\eqref{A and B}, respectively,
 for $D_j$ and $\bar\alpha_j(x)$ respectively the $n_j\times n_j$-diagonal matrices with diagonal entries
$d_i$ and $\alpha_i(x)$ for $i\in I_j$, and initial value $\psi^j\in C([-1,0],\Pi_{i\in I_j}  E_i^\gamma)$. Then, $K^j=K\times \{0\}\subset K\times C([-1,0],\Pi_{i\in I_j}  E_i^\gamma)$ is a
minimal set for $L_j$ which admits a continuous separation of type~II. Let $\Sigma_p^j$ be its principal spectrum.
\par
If $k=1$, {\it i.e.}, if the matrix $\bar A+\bar B$ is irreducible, let $I=J=\{1\}$. Else, let
\begin{align*}
I&=\{j\in\{1,\ldots,k\} \,\mid\, \bar A_{ji}+\bar B_{ji}=0 \text{ for any } i\not= j\},\\
J&=\{j\in\{1,\ldots,k\} \,\mid\, \bar A_{ij}+\bar B_{ij}=0 \text{ for any } i\not= j\},
\end{align*}
that is, $I$ is composed by the indexes $j$ such that any block in the row of $\bar A_{jj}+\bar B_{jj}$, other than itself, is null, whereas $J$ contains those indexes $j$ such that any block in the column of $\bar A_{jj}+\bar B_{jj}$, other than itself, is null.
Then, some sufficient conditions for uniform and strict persistence at $0$ are the following:
\begin{itemize}
 \item[(i)] If $\Sigma_p^j\subset (0,\infty)$ for any $j\in I$, then  $\tau$ is uniformly persistent in the area situated strongly above $K$.
  \item[(ii)] If $\Sigma_p^j\subset (0,\infty)$ for any $j\in J$, then   $\tau$ is strictly persistent at $0$ in the area situated  above $K$.
 \end{itemize}
\end{teor}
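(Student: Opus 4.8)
The plan is to follow the scheme devised for non-autonomous ODEs and FDEs in Novo et al.~\cite{noos7} and Obaya and Sanz~\cite{obsa,obsa18}; the purpose of Sections~\ref{sec-skew-product}--\ref{sec-linearized sem} is precisely to make that scheme available for parabolic PFDEs with Robin or Dirichlet conditions. First I would treat the diagonal blocks. For each $j=1,\dots,k$ the family of linear systems~\eqref{bloque j} is of the type considered in Theorem~\ref{teor-linearized sk}: by (C1)--(C4), and since the fibre components of points of $K$ lie in $C^{1,2}([-1,0]\times\bar U,\R^n)$ (run the semiorbit backwards for time $>2$ and then forwards, using Theorems~\ref{teor-space regularity-Neumann} and~\ref{teor-space regularity-Dirich}), the coefficients $A_{jj},B_{jj}$ evaluated along the orbits of $K$ inherit the Lipschitz, $(C^\theta(t))$ and $(C^{2\theta}(x))$ conditions, their off-diagonal entries are nonnegative by (C4), and the associated constant matrix is the irreducible block $\bar A_{jj}+\bar B_{jj}$. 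Hence Theorem~\ref{teor-sep cont tipo II} applies to $L_j$ over the minimal set $K^j=K\times\{0\}$, which inherits the flow extension of $K$, producing a continuous separation of type~II over $K^j$ and, by Proposition~4.4 in~\cite{noos7} together with Mierczy\'{n}ski and Shen~\cite{mish}, a well-defined compact principal spectrum $\Sigma_p^j$; in the Dirichlet case it may be computed with the $C$-norm by Proposition~\ref{prop-Lyapunov exponents}.

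Next I would establish the persistence of a single block. Fix $(\w,\varphi)\in K$ and, for~(i), $\phi\gg\varphi$, and set $w(t)=z(t,\w,\phi)-z(t,\w,\varphi)\ge 0$. By Theorems~\ref{teor-space regularity-Neumann} and~\ref{teor-space regularity-Dirich} the function $w$ is classical for $t>1$, so the parabolic minimum principle is at our disposal, and Proposition~\ref{prop-strong monotonicity-Dirichlet}(ii) (resp.\ Proposition~\ref{prop-monotone}(ii) and the subsequent remark) gives $w_i(t)\gg 0$ for $t>0$ and every $i$. If $i$ belongs to a source block $j\in I$, the representation~\eqref{mean value componente} and (C4) show that $w_i$ is a supersolution of the closed $n_j$-dimensional cooperative linear system obtained by discarding the nonnegative contributions of the other blocks, whose coefficients coincide along $K$ with those of~\eqref{bloque j}; the hypothesis $\Sigma_p^j\subset(0,\infty)$ is then fed into the instability-to-persistence argument of~\cite{noos7,obsa}, now with the strongly positive semigroups $e^{tA_i}$ as the order-preserving evolution, to conclude that $w^{I_j}(t)$ eventually lies above a fixed strongly positive element. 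For a block $j\notin I$ I would argue by induction along the coupling digraph: $j$ is fed by some block $i$ with $\bar A_{ji}+\bar B_{ji}\ne 0$, the corresponding coupling coefficient in the variational equation for $w$ is nonnegative and, being close to its value along the reference orbit for $\lambda$ near $0$ while that orbit is dense in $K$, is positive on a substantial set of times; once $w^{I_i}$ is bounded below this acts as a positive forcing on $w^{I_j}$, and the positivity of $e^{tA_i}$ transmits the lower bound. Since every block is reachable from a source, all components of $w$ are eventually bounded below, and a compactness argument over the minimal set $K$ yields a single $\psi_0\gg 0$ valid for every $(\w,\varphi)$ and every $\phi\gg\varphi$; this is~(i).

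For~(ii) I would start from $\phi\ge\varphi$ with $\phi(0)>\varphi(0)$: the remark after Proposition~\ref{prop-monotone} (and Proposition~\ref{prop-strong monotonicity-Dirichlet} in the Dirichlet case) yields $w_{i_0}(t)\gg 0$ for $t>0$ for some index $i_0$, say $i_0\in I_{j_0}$; irreducibility of $\bar A_{j_0 j_0}+\bar B_{j_0 j_0}$ spreads strong positivity over all of $I_{j_0}$, and following the coupling digraph \emph{downstream}, exactly as in the previous paragraph but carrying strong positivity, one reaches a sink block $j^\ast\in J$ with $w^{I_{j^\ast}}(t)\gg 0$ for large $t$. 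Applying to that block the same instability-to-persistence argument with $\Sigma_p^{j^\ast}\subset(0,\infty)$ produces a fixed $\psi_{j^\ast}>0$ below which $w$ cannot fall; the finite family $\{\psi_j:j\in J\}$ then covers every sink that may be reached, which is precisely what Definition~\ref{defi-persistence}(ii) asks for.

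I expect the main obstacle to be the step, for a single irreducible block, from positivity of the principal spectrum (an infinitesimal repulsion of the zero section) to a \emph{uniform} nonlinear lower estimate over all of $K$. Two features make it delicate: the variational equation for $w$ carries coefficients evaluated along the non-recurrent orbit $z_t(\w,\phi)$ rather than along $K$, so it must be dominated using monotonicity and the bounding matrix $\bar A+\bar B$ of~\eqref{A+B}; and the pointwise lower bounds have to be extracted from the positivity of the analytic semigroups $e^{tA_i}$, which is why the $C^{1,2}$-regularity of Theorems~\ref{teor-space regularity-Neumann} and~\ref{teor-space regularity-Dirich}, hence the availability of the classical parabolic maximum/minimum principle, is indispensable and constitutes the extra ingredient beyond~\cite{noos7,obsa} needed to accommodate Robin and Dirichlet boundary conditions. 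By contrast the propagation along the lower-triangular coupling, though technical, is comparatively routine once one notes that the relevant coupling coefficients are activated along a dense set of times of the reference orbit in $K$.
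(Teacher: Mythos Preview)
Your overall plan—block decomposition, continuous separation of type~II for each diagonal block via Theorem~\ref{teor-sep cont tipo II}, and propagation along the coupling digraph—matches the paper's, and your treatment of part~(ii) is essentially the paper's: spread strict positivity within the block containing $i_0$ by irreducibility, follow the coupling downstream to a sink $j^\ast\in J$, and apply the positive principal spectrum there (the paper implements the last step by building an explicit $n_{j^\ast}$-dimensional nonlinear minorant family to which the irreducible case applies, but the idea is yours).

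The substantive departure is in part~(i). The paper does \emph{not} work directly with the nonlinear difference $w=z(\cdot,\omega,\phi)-z(\cdot,\omega,\varphi)$. Instead it constructs, block by block, a single test function $\psi\gg 0$ and a time $t_1$ with
\[
D_{\!\varphi} z_t(\omega,\varphi)\,\psi \gg 2\,\psi \quad\text{for all } t\ge t_1 \text{ and all } (\omega,\varphi)\in K,
\]
and then invokes the abstract Theorem~3.3 of~\cite{noos7} to pass from this linearized repulsion to uniform persistence of $\tau$. The components $\psi^{I_j}$ for $j\in I$ come from Theorem~4.5 of~\cite{noos7} applied to $L_j$; for $j\notin I$ the paper writes down an auxiliary \emph{scalar} parabolic PDE with a positive forcing supplied by an already-constructed block, and uses minimality of $K$ (every orbit visits the set where the coupling coefficient is positive within a bounded time) to produce the remaining components.

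This route bypasses exactly the obstacle you flag. Since $D_{\!\varphi} z_t(\omega,\varphi)$ is the linearization along the orbit \emph{in} $K$, the construction of $\psi$ never leaves $K$, and the passage from linear to nonlinear is delegated wholesale to the abstract theorem. Your proposed remedy, ``dominate using monotonicity and the bounding matrix $\bar A+\bar B$'', does not obviously yield \emph{lower} bounds, because the entries $\bar a_{ij},\bar b_{ij}$ are suprema and control coupling from above; and your final compactness step to obtain a single $\psi_0$ valid for every $\phi\gg\varphi$ is delicate since $\phi$ ranges over an unbounded set. The test-function construction plus Theorem~3.3 of~\cite{noos7} is the clean resolution, and it is what makes the ``instability-to-persistence'' step you invoke actually go through.
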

\begin{proof}
We skip some details in the proof, since it often follows arguments in the proofs of Theorem~5.8 in Novo et al.~\cite{noos7} and Theorem~5.3 in Obaya and Sanz~\cite{obsa} for delay equations without diffusion, for (i) and (ii) respectively.
\par
Note that a convenient permutation of the variables takes the matrix $\bar A+\bar B$ into the form~\eqref{triangular}, and $\bar a_{ij},\,\bar b_{ij}\geq 0$ because of (C4) and the definition.
Also, we maintain the notation introduced in Theorem~\ref{teor-linearized sk} for the variational problems. Besides, for any map $v$, let us denote $v^j=(v_i)_{i\in I_j}$, for $j=1,\ldots,k$.
\par
To see (i), we distinguish three cases.
\par\noindent \textbf{(A1)}: $k=1$, that is, $\bar A+\bar B$ is an irreducible
matrix. Then Theorem~\ref{teor-sep cont tipo II} says that $K$ admits a continuous separation of type~II, and since $\Sigma_p^1\subset (0,\infty)$, the abstract Theorem~4.5 in~\cite{noos7} implies that $\tau$ is u-persistent in the area strongly above $K$.
\par\noindent \textbf{(A2)}: $k>1$ and  $\bar A+\bar B$ is a reducible matrix with a block diagonal structure.
In this case the argument goes exactly as in case (C2) in the proof of Theorem~5.8  in~\cite{noos7} for delay equations without diffusion. The key is to apply  Theorem~4.5 in~\cite{noos7} to each of the uncoupled linear skew-product semiflows $L_j$, which admit a continuous separation of type~II and have positive principal spectrums. In all, we find a map $\psi_0\gg 0$ and a  $t_0>0$ such that $D_{\!\varphi} z_{t}(\w,\varphi)\,\psi_0\gg 2\,\psi_0$ for $t\geq t_0$ and $(\w,\varphi)\in K$. Then,  Theorem~3.3 in~\cite{noos7} provides the u-persistence in the zone strongly above $K$.

\par\noindent \textbf{(A3)}: $k>1$ and  $\bar A+\bar B$ is a reducible matrix with a non-diagonal block lower triangular structure, that is, at least one of the non-diagonal blocks in \eqref{triangular} is not null. This time we combine the arguments in case (C3) in the proofs of Theorem~5.6 for PDEs and Theorem~5.8 for delay equations in~\cite{noos7}.   As in case (A2), the aim is to find a map $\psi\gg 0$ and a $t_1>0$ such that $D_{\!\varphi} z_{t}(\w,\varphi)\,\psi\gg 2\,\psi$ for $t\geq t_1$ and $(\w,\varphi)\in K$, so that Theorem~3.3 in~\cite{noos7} applies. Note that, since for $j\in I$ the systems \eqref{bloque j} are uncoupled, arguing as in (A2) we already have the appropriate maps $\psi_0^j\gg 0$  for $j\in I$ and the appropriate $t_0>0$, so that if $\psi\gg 0$ with $\psi^j=\psi_0^j$ for $j\in I$, then $v_t^j(\w,\varphi,\psi)\gg 2\,\psi^j$ for $t\geq t_0$ and $(\w,\varphi)\in K$, for each $j\in I$. That is, it remains to adequately complete the other components of $\psi\gg 0$.
\par
Since $1\in I$, we move forwards filling the gaps, so take $l=\min\{j\in \{2,\ldots,k\}\mid j\notin I\}\geq 2$. Then, at least one of the blocks to the left of $\bar A_{ll} +\bar B_{ll}$ is not null, that is, there exists an $m<l$, $m\in I$ such that $\bar A_{lm} +\bar B_{lm}\not=0$, so that  $\bar a_{i_1k}+\bar b_{i_1k}>0$ for some  $i_1\in I_l$ and $k\in I_m$.
For $u(t,x)=v(t,\w,\varphi,\psi)(x)\geq 0$ by Proposition \ref{prop-monotone}, from~\eqref{linear family}, the block lower triangular structure of the linearized systems, condition (C4), and since $k\in I_m$ with $m\in I$, we have that
\vspace{-0,2cm}
\begin{multline*}
\frac{\partial u_{i_1}}{\partial t}(t,x)=d_{i_1}\Delta u_{i_1}(t,x)+\sum_{j=1}^{i_1} \big( a_{i_1j}(\cdot)\,u_j(t,x)+b_{i_1j}(\cdot)\,u_j(t-1,x)\big)\\
\geq d_{i_1}\Delta u_{i_1}(t,x)+2\,a_{i_1k}(\cdot)\,(\psi_0^m)_k(0,x) +2\,b_{i_1k}(\cdot)\,(\psi_0^m)_k(-1,x) +a_{i_1i_1}(\cdot)\,u_{i_1}(t,x)
\end{multline*}
for $t\geq t_0$ and $x\in \bar U$, where $(\cdot)$ stands for $(\w{\cdot}t,x,z(t,\w,\varphi)(x),z(t-1,\w,\varphi)(x))$.
Then, we consider the auxiliar family of scalar parabolic PDEs for $(\w,\varphi)\in K$,
\begin{equation*}
\des\frac{\partial h}{\partial t}=
 d_{i_1}\Delta h+ 2\,a_{i_1k}(\cdot)\,(\psi_0^m)_k(0,x) +2\,b_{i_1k}(\cdot)\,\,(\psi_0^m)_k(-1,x)
 + a_{i_1i_1}(\cdot)\,h(t,x)
\end{equation*}
for $t>0$, $x\in \bar U$, with boundary condition $\alpha_{i_1}(x)\,h(t,x)+\delta\,\frac{\partial h}{\partial n}(t,x) =0$ for $t>0$ and $x\in \partial U$. Since $\bar a_{i_1k}+\bar b_{i_1k}>0$ means that  $a_{i_1k}(\w_1,x_1,\varphi_1(0,x_1),\varphi_1(-1,x_1))+b_{i_1k}(\w_1,x_1,\varphi_1(0,x_1),\varphi_1(-1,x_1))>0$
 for some $(\w_1,\varphi_1)\in K$ and $x_1\in U$, and $(\psi_0^m)_k(0,x_1), (\psi_0^m)_k(-1,x_1)>0$, one can apply the same dynamical argument used in Theorem~5.6 in~\cite{noos7} to conclude that there exist a $t_{i_1}>0$ and a map $\psi_{0i_1}\in E_{i_1}^\gamma$ with $\psi_{0i_1}\gg 0$ such that $h(t,\,\cdot\,,\w,\varphi,0) \gg 2 \,\psi_{0i_1}$ for any $(\w,\varphi)\in K$ and $t\geq t_{i_1}$. Note that a version of Lemma~2.11~(ii) in N\'{u}\~{n}ez et al.~\cite{nuos3} for Dirichlet boundary conditions in the intermediate space $E_{i_1}^\alpha$  has been used. Finally, consider $\psi_{0i_1}\in C([-1,0],E_{i_1}^\gamma)$ the identically equal to $\psi_{0i_1}$ map, which satisfies $\psi_{0i_1}\gg 0$, and take any initial condition $\psi\gg 0$ with $\psi^j=\psi_0^j$ for $j\in I$ and $\psi^l_{i_1}=\psi_{0i_1}$.  Then, comparing solutions of the two previous problems (see Martin and Smith~\cite{masm0,masm}), we can conclude that $(v_{i_1}^l)_t(\w,\varphi,\psi)\gg 2\,\psi_{i_1}^l$ for $t\geq t_0+t_{i_1}+1$ and $(\w,\varphi)\in K$, and we are done with the component $i_1\in I_l$.
\par
The argument for the rest of components in $I_l$, if any, is similar and relies on the irreducibility of the block $\bar A_{ll}+\bar B_{ll}$; and for the remaining blocks, if any, is just the same. The proof of (i) is finished.
\par
To see (ii) we consider again three cases, in accordance with Theorem~5.3 in~\cite{obsa}.
\par\noindent \textbf{(B1)}: $k=1$, that is, $\bar A+\bar B$ is an irreducible
matrix. By (i), we already know that $\tau$ is u-persistent. To see that it is also $s_0$-persistent, take $\psi_0\gg 0$ the map given in Definition~\ref{defi-persistence}~(i) and  $t_*\geq 1$ the time given in Theorem~\ref{teor-sep cont tipo II}~(i). Now take  $(\w,\varphi)\in K$ and $\phi\geq \varphi$ with $\phi(0)> \varphi(0)$. Then, $\phi_i(0)> \varphi_i(0)$ for some $i$ and Proposition~\ref{prop-monotone}~(ii) applied to the linearized systems implies that $v_i(t,\w,\varphi,\phi-\varphi)\gg 0$ for any $t>0$. Then it cannot be $D_{\!\varphi} z_{t_*}(\w,\varphi)\,(\phi-\varphi) =0$, and necessarily
$D_{\!\varphi} z_{t_*}(\w,\varphi)\,(\phi-\varphi) \gg 0$. By continuity,  $D_{\!\varphi} z_{t_*}(\w,\lambda\phi+(1-\lambda)\varphi)\,(\phi-\varphi) \gg 0$ for $\lambda\in[0,\varepsilon]$ for a certain $\varepsilon>0$, and using~\eqref{mean value}, $z_{t_*}(\w,\phi)\gg z_{t_*}(\w,\varphi)$. To finish, apply the u-persistence to $(\w{\cdot}t_*,z_{t_*}(\w,\varphi))\in K$ together with the semicocycle property~\eqref{semicocycle}.
\par
Now, for $k>1$, take $\phi\geq \varphi$ with $\phi(0)> \varphi(0)$ and distinguish two possibilities:

\par\noindent \textbf{(B2)}: $k> 1$ and   $\phi_i(0)> \varphi_i(0)$ for some $i\in I_j$ with $j\in J$. In this case we follow the arguments in case (C2) in the proof of Theorem~5.3 in~\cite{obsa} for delay equations without diffusion. Basically, a family of $n_j$-dimensional systems of nonlinear parabolic PFDEs with delay over the base flow in $K$ is built, in such a way that it is a minorant family for the components  $y^j(t,x)=z^j(t,\w,\varphi)(x)$, and besides the linearized systems along the orbits in a minimal set are precisely the systems~\eqref{bloque j}, with irreducible matrix $\bar A_{jj}+\bar B_{jj}$. Then, to this family case (B1) applies, and thus there exist a $\psi_0^j\in C([-1,0],\Pi_{i\in I_j}  E_i^\gamma)$, $\psi_0^j\gg 0$ and a $t_0^j>0$, associated to its u-persistence. Then, using standard arguments of comparison of solutions, one can check  that $z_t(\w,\phi)\geq
z_t(\w,\varphi)+\psi_j$ for any $t\geq
t_0^j$, for the map $\psi_j \in C_\gamma$ defined by $\psi_j^j=\psi_0^j$ and $\psi_j^m=0$ if $m\not=j$, which satisfies $\psi_j>0$. Just remark that the maps $\{\psi_j\}_{j\in J}$ built in this way are the appropriate collection required in Definition~\ref{defi-persistence}~(ii).

\par\noindent \textbf{(B3)}: $k> 1$ and  $\phi^l(0)=\varphi^l(0)$ for any $l\in J$. Then, consider $i$ such that $\phi_i(0)> \varphi_i(0)$ with $i\in I_j$ for some $j\notin J$. Now we distinguish two situations:

\par\noindent \textbf{(B3.1)}: There exists an $m\geq 1$ such that $\bar A_{j+m,j}+\bar B_{j+m,j}\not=0$ with $j+m\in J$. In this case we search for a time $t_1>0$ such that $z^{j+m}(t_1,\w,\phi)>z^{j+m}(t_1,\w,\varphi)$, for then we can apply case (B2) together with the semicocycle relation~\eqref{semicocycle}.
\par
As a first step, let us study the components $v_t^j(\w,\varphi,\phi-\varphi)$. Write $L_j(t,\w,\varphi,\psi^j)=(\tau(t,\w,\varphi),w_t(\w,\varphi,\psi^j))$ for the linear skew-product semiflow induced by the solutions of \eqref{bloque j} for $(\w,\varphi)\in K$ and $\psi^j\in C([-1,0],\Pi_{i\in I_j}  E_i^\gamma)$. By condition (C4), a comparison of solutions argument says that $v_t^j(\w,\varphi,\phi-\varphi)\geq w_t(\w,\varphi,\phi^j-\varphi^j)$ for $t\geq 0$. Besides, applying Proposition~\ref{prop-monotone}~(ii) to $L_j$, since $\phi^j_i(0)>\varphi^j_i(0)$, we get that $w_i(t,\w,\varphi,\phi^j-\varphi^j)\gg 0$ for any $t>0$. Therefore, it must be $w_{t_*}(\w,\varphi,\phi^j-\varphi^j)\gg 0$ for $t_*\geq 1$ the time given in Theorem~\ref{teor-sep cont tipo II}~(ii) for $L_j$. Then, the linear semicocycle property~\eqref{linear semicocycle} and Proposition~\ref{prop-monotone}~(ii) imply that $w_t(\w,\varphi,\phi^j-\varphi^j)\gg 0$ for $t\geq t_*$, so that also $v_{t}^j(\w,\varphi,\phi-\varphi)\gg 0$ for $t\geq t_*$.
\par
Finally, take $i_1\in I_{j+m}$ and $k\in I_j$ such that $\bar a_{i_1k}+\bar b_{i_1k}>0$. Then, for $u(t,x)=v(t,\w,\varphi,\phi-\varphi)(x)$ recall that with (C4), $u(t,x)\geq 0$ by Proposition~\ref{prop-monotone}~(i), and since $k\in I_j$, $u_{k}(t,x)> 0$ for any $t\geq t_*-1$ and $x\in U$. Now, arguing as in the proof of Theorem~5.1 in Novo et al.~\cite{nonuobsa}, associated to the minimal set $K$ and to the open set $U_{i_1k}=\{(\tilde \w,\tilde \varphi)\in \Om\times C_\gamma\mid a_{i_1k}(\tilde\w,x,\tilde\varphi(0,x),\tilde\varphi(-1,x))+b_{i_1k} (\tilde\w,x,\tilde\varphi(0,x),\tilde\varphi(-1,x))>0 \;\text{for some } x\in U\}$,    there exists a $T_0>2$ such that for any $(\tilde \w,\tilde \varphi)\in K$ there is a $t_0\in (2,T_0)$ such that $\tau(t_0,\tilde\w,\tilde \varphi)\in U_{i_1k}$. Applying this property to $\tau(t_*,\w,\varphi)\in K$ there exist a $t_0\in (2,T_0)$ and an $x_0\in U$ such that
\begin{multline*}
\wit a_{i_1k} + \wit b_{i_1k}:= a_{i_1k}(\w{\cdot}(t_*+t_0),x_0,z(t_*+t_0,\w,\varphi)(x_0),z(t_*+t_0-1,\w,\varphi)(x_0))\\
+b_{i_1k}(\w{\cdot}(t_*+t_0),x_0,z(t_*+t_0,\w,\varphi)(x_0),z(t_*+t_0-1,\w,\varphi)(x_0))>0\,.
\end{multline*}
Now, by (C4), on the one hand $u_{i_1}(t,x)$ satisfies the following parabolic inequality
\[
\frac{\partial u_{i_1}}{\partial t}(t,x)
\geq d_{i_1}\Delta u_{i_1}(t,x) +a_{i_1i_1}(\w{\cdot}t,x,z(t,\w,\varphi)(x),z(t-1,\w,\varphi)(x))\,u_{i_1}(t,x)
\]
for $t> t_*$ and $x\in \bar U$,  together with the corresponding boundary condition. Then, if it were $u_{i_1}(t_*+t_0,x_0)=0$, the minimum
principle for scalar parabolic PDEs would say that $u_{i_1}(t,x)=0$ for any $(t,x)\in
[t_*,t_*+t_0]\times \bar U$, so that in particular $\Delta  u_{i_1}(t_*+t_0,x_0)=0$
and $\partial_t u_{i_1}(t_*+t_0,x_0)=0$. But on the other hand, then
\[
\frac{\partial u_{i_1}}{\partial t}(t_*+t_0,x_0) \geq
\wit a_{i_1k}\,\,u_k(t_*+t_0,x_0)
+ \wit b_{i_1k}\,\,u_k(t_*+t_0-1,x_0)>0\,,
\]
a contradiction. Therefore, $u_{i_1}(t_*+t_0,x_0)>0$, so that $v_{i_1}(t_*+t_0,\w,\varphi,\phi-\varphi)>0$ and by Proposition~\ref{prop-monotone}~(ii), $v_{i_1}(t,\w,\varphi,\phi-\varphi)\gg 0$ for any $t>t_*+t_0$. Take such a $t_1>t_*+t_0$ and use relation \eqref{mean value componente} together with a continuity  argument to conclude that $z_{i_1}(t_1,\w,\phi)\gg z_{i_1}(t_1,\w,\varphi)$, so that $z^{j+m}(t_1,\w,\phi)>z^{j+m}(t_1,\w,\varphi)$, as we wanted.
\par\noindent \textbf{(B3.2)}: For any $m\geq 1$ such that $\bar A_{j+m,j}+\bar B_{j+m,j}\not=0$,  $j+m\notin J$. In this case, we take the greatest $m\geq 1$ such that $\bar A_{j+m,j}+\bar B_{j+m,j}\not=0$ and we argue as in case (B3.1) to find a $t_1>0$ such that $z^{j+m}(t_1,\w,\phi)>z^{j+m}(t_1,\w,\varphi)$. Since $j+m\notin J$, again there is an $l\geq 1$ such that $\bar A_{j+m+l,j+m}+\bar B_{j+m+l,j+m}\not=0$. If for some such $l\geq 1$, $j+m+l\in J$ we fall again in case (B3.1), and if not, we are again in case (B3.2) and we just iterate the procedure. Since $k\in J$, in a finite number of iterations we fall in case (B3.1). The proof is finished.
\end{proof}

\end{document}